\newcommand*\pFq[6][8]{%
  \begingroup 
  \pFqmuskip=#1mu\relax
  \mathcode`\,=\string"8000
  \begingroup\lccode`\~=`\,
  \lowercase{\endgroup\let~}\pFqcomma
  {}_{#2}F_{#3}{\left[\genfrac..{0pt}{}{#4}{#5};#6\right]}%
  \endgroup
}
\newcommand{\pFqcomma}{\mskip\pFqmuskip}
\newcommand*\pPq[6][8]{%
  \begingroup 
  \pPqmuskip=#1mu\relax
  \mathcode`\,=\string"8000
  \begingroup\lccode`\~=`\,
  \lowercase{\endgroup\let~}\pPqcomma
  {}_{#2}\Phi_{#3}{\left[\genfrac..{0pt}{}{#4}{#5};#6\right]}%
  \endgroup
}
\newcommand{\pPqcomma}{\mskip\pPqmuskip}
\newtheorem{theorem}{Theorem}[section]
\newtheorem{lemma}[theorem]{Lemma}
\newtheorem{prop}[theorem]{Proposition}
\theoremstyle{definition}
\theoremstyle{remark}
\newtheorem{remark}[theorem]{Remark}
\numberwithin{equation}{section}
\newcommand{\be}{\begin{equation}}
\newcommand{\ee}{\end{equation}}
\newcommand{\ba}{\begin{eqnarray}}
\newcommand{\ea}{\end{eqnarray}}
\newcommand{\baa}{\begin{eqnarray*}}
\newcommand{\eaa}{\end{eqnarray*}}
\newcommand{\bea}{\begin{eqnarray*}}
\newcommand{\eea}{\end{eqnarray*}}
\newcommand{\bb}{}
      \def\cF{{\mathcal F}}
\def\cG{{\mathcal G}}      
      \def\cL{{\mathcal L}}
\def\cM{{\mathcal M}}
\title[Exceptional Laurent biorthogonal polynomials]
{Exceptional Laurent biorthogonal polynomials through spectral transformations of generalized eigenvalue problems}
\author{Yu Luo}
\address{Department of Mathematics\\
Zhejiang University of Technology\\
Hangzhou 310014, China}
\email{luoyu4304@outlook.com}
\author{Satoshi Tsujimoto}
\address{Department of Applied Mathematics and Physics\\
Graduate School of Informatics, Kyoto University\\
Sakyo-Ku, Kyoto, 606 8501, Japan}
\email{tsujimoto.satoshi.5s@kyoto-u.jp}
\thanks{Yu Luo\\
luoyu4304@outlook.com\\
+86-18720265091\\
Department of Mathematics, Zhejiang University of Technology, Hangzhou 310014, China\\
Satoshi Tsujimoto\\
tsujimoto.satoshi.5s@kyoto-u.jp\\
+81- \\
Department of Applied Mathematics and Physics, Graduate School of Informatics, Kyoto University, 
Sakyo-Ku, Kyoto, 606 8501, Japan
}
\begin{document}

\begin{abstract}
A formulation is given for the spectral transformation of the generalized eigenvalue problem through 
the decomposition of the second-order differential operators. 
This allows us to construct some Laurent biorthogonal polynomial systems with gaps in the degree 
of the polynomial sequence. 
These correspond to an exceptional-type extension of the orthogonal polynomials, 
as an extension of the Laurent biorthogonal polynomials. 
Specifically, we construct the exceptional extension of the Hendriksen-van Rossum polynomials, 
which are biorthogonal analogs of the classical orthogonal polynomials. 
Similar to the cases of exceptional extensions of classical orthogonal polynomials, 
both of state-deletion and state-addition occur. 

\smallskip
\noindent \textbf{\keywordsname}
Generalized eigenvalue problem; 
Differential operators; 
Laurent biorthogonal polynomials; Exceptional Laurent biorthogonal polynomial; 
Hendriksen-van Rossum polynomials. 

\smallskip
\noindent \textbf{2020 Mathematics Subject Classification}
33C45; 33C47; 42C05
\end{abstract}

\maketitle

\section{Introduction}
The eigenvalue problem and its isospectral transformations of operators expressed in terms of second-order derivatives or ($q$-)differences have a variety of known applications, and various concrete examples are known in quantum mechanics, stochastic processes, and so on. 
In particular, they are closely related to the theory of classical orthogonal polynomials (COP), 
which are known to have especially good properties among orthogonal polynomials.
It is known that the orthogonal polynomials which are eigenfunctions of second-order differential or ($q$-)difference operators are given by Askey-Wilson polynomials and their degenerations, and various results are known on the problem of eigenvalue-preserving deformations of them. %
One topic that has attracted much attention in recent research on COP is called exceptional-type extensions, 
in which a sequence of polynomials with degree jumps can provide a basis for an appropriate weighted $L^2$-space. 
The exceptional orthogonal polynomials (XOP) generalize COP by loosening restrictions on their degree sequence. 

During the last decade, extensive efforts have been devoted to many aspects of the theory and applications of XOP. 
After the first examples of XOP were introduced by G\'omez, Kamran, and Milson in 
\cite{GKM09, GKM10_1}, 
their application potential was immediately recognized by many physicists and mathematicians 
\cite{Quesne08, Quesne09, MR09, OS09, OS11}. 
This is because these XOP, which satisfy second-order differential equations, 
can be used to derive new exactly solvable potentials. 
The Darboux transformation plays an important role in the construction of XOP, 
many examples of XOP are obtained by using this method
\cite{GKM10_2, X q-Racah, GGM13, STA}. 
It was further clarified that multiple-step or higher-order Darboux transformations lead to XOP labeled by multi-indices 
\cite{GKM12, OS13}. 
Dur\'an developed another systematic way of constructing XOP by using the concept of dual families of polynomials 
\cite{BispOP, Krall_Hahn, XOPviaKrall}, 
which is different from the method of Darboux transformation. 
Some important properties such as recurrence relations \cite{X-RR, GKK16, O16}, 
zeros \cite{HS12, GMM13, KM15}
and spectral analysis \cite{LLM16}
regarding XOP were also discussed. 
In \cite{X-Bochner}, a complete classification of the continuous XOP which generalizes the classical ones 
(Jacobi, Laguerre, and Hermite polynomials) was addressed. 
One can also refer to \cite{X-Kra} for a detailed discussion of the exceptional Krawtchouk polynomials.  

%
The exceptional-type extensions have so far been limited to orthogonal polynomials based on the eigenvalue problems (EVP), but in this paper we show that they can be applied to biorthogonal polynomials by considering spectral transformations of the generalized eigenvalue problems (GEVP).
First, we study spectral transformations of GEVPs involving second-order differential operators.
This kind of GEVP is closely related to the theory of biorthogonal rational functions (BORF). 
A specialization of BORF is the class of Laurent biorthogonal polynomials (LBP). 
The spectral transformations are based on the decomposition of the second-order differential operators 
in the GEVP, and they can be seen as generalizations of the Darboux transformations. 
Different from the case of COP, when considering BORF both the GEVP and the adjoint GEVP should be discussed. 
The reason is that the biorthogonality relations hold for the eigenfunctions of the GEVP and their biorthogonal partners, 
where the latter can be obtained by applying an operator of the adjoint GEVP to their eigenfunctions 
\cite{BORF_GEVP}. 
Using the results of these spectral transformations we can construct LBP with jumps 
in their degree sequences, that is, the exceptional extensions of LBP. 
Specifically, we will construct the exceptional extensions of 
the Hendriksen-van Rossum (HR) polynomials \cite{HR}, 
which are believed to be the simplest biorthogonal analogs of the COP. 
We will provide the exceptional HR polynomials and their biorthogonal partners obtained from 
single-step Darboux transformation explicitly. 
Their biorthogonality relations and some related issues will also be discussed.

This paper unfolds as follows. 
In section 2, we first give a brief review of the theory of Darboux transformations for standard eigenvalue problems, 
then we extend the scope to the GEVP and introduce the generalized Darboux transformations. 
In particular, results on the generalized Darboux transformations for the adjoint GEVP are also provided here, 
from which the biorthogonal partners and weight functions can be derived. 
The biorthogonality relations between the eigenfunctions of transformed GEVP and the biorthogonal partners are 
obtained by using the results of \cite{BORF_GEVP}.

Section 3 introduces the definition and some important properties of HR polynomials. 
The quasi-polynomial eigenfunctions (which are defined as products of a gauge factor and a polynomial part) 
and eigenvalues associated with the GEVP of HR polynomials are derived here. 
Interestingly enough, we also find the corresponding quasi-Laurent-polynomial eigenfunctions, 
which are products of a gauge factor and a Laurent-polynomial part. 
These eigenfunctions play an important role in the Darboux transformations, 
as we will choose the seed functions for the construction of the exceptional extensions. 
We also provide the eigenfunctions and eigenvalues of the adjoint GEVP explicitly. 

In section 4, we derive the single-step Darboux transformed HR polynomials 
and their biorthogonal partners, as well as the corresponding weight functions by using the results of section 2 and section 3. 
Here we write the polynomials obtained from the transformed eigenfunctions explicitly 
and show the existence of gaps in their degree sequence, which characterizes the exceptional extension. 
We show there are four types of exceptional HR polynomials, 
which are classified concerning the related seed functions. 
The biorthogonality relations of these exceptional HR polynomials are also presented explicitly. 

In section 5, we give some careful observations for the case of multiple-step Darboux transformations, 
and we provide the corresponding eigenfunctions in terms of Wronski determinants 
whose elements are the seed functions and their derivatives.  

Finally, we end this paper with concluding remarks in section 6.

\section{Darboux transformations}
\subsection{Standard eigenvalue problem}
First, we quickly review the Darboux transformations which act as isospectral transformations for 
standard eigenvalue problems, before introducing the ones for generalized eigenvalue problems required for exceptional extensions of biorthogonal rational functions. 

Let us consider the (standard) eigenvalue problem:
\begin{align}
 \cL \psi(z) = \lambda \psi(z),
\label{EVP_0}
\end{align}
where 
$\cL$ is a second-order differential or difference operator. 
Before applying Darboux transformation to (\ref{EVP_0}), 
we need to decompose the operator $\mathcal{L}$ into two first-order operators. 
Below, we will do this for the case of a second-order differential operator.

If $\cL$ in (\ref{EVP_0}) is given by a second-order differential operator 
\begin{align}
 \cL=A(z)\partial^2_z +B(z)\partial_z +C(z) I,
\end{align}
with the identity operator $I$ and the differential operator $\partial_z=\dfrac{d}{dz}$, 
then for any function $\phi(z)$ which is not identically zero, the operator $\cL$ can be decomposed into
\begin{align}
 \cL = \cG^{(0)} \cF^{(0)} + \dfrac{A(z)\phi''(z)+B(z)\phi'(z)+C(z)\phi(z)}{\phi(z)}I, 
\label{standardEVP_00}
\end{align}
where
\begin{align}
 \cG^{(0)} &
= \dfrac{1}{\phi(z)}\left(A(z)\partial_z+B(z)\right)\phi(z)\epsilon(z) I,
\nonumber\\ 
 \cF^{(0)} &= \dfrac{\phi(z)}{\varepsilon(z)}\partial_z\dfrac{1}{\phi(z)} I,
\nonumber
\end{align}
with any arbitrary decoupling factor $\epsilon(z)$.

Furthermore, if $\phi(z)$ is an eigenfunction of the eigenvalue problem \eqref{EVP_0} with the eigenvalue $\kappa$:
\begin{align}
 \cL \phi(z) = \kappa \phi(z).
\end{align}
Then the last term in \eqref{standardEVP_00} can be replaced by a constant multiplication operator that multiplies 
by $\kappa$ as 
\begin{align}
 \cL = \cG^{(0)} \cF^{(0)} + \kappa I.
\label{L:decompose00}
\end{align}

By applying the operator $\cF^{(0)}$ to \eqref{EVP_0} and using \eqref{L:decompose00}, it can be shown that  
\begin{align*}
& \left(\cF^{(0)}\cG^{(0)}  + \kappa I\right) \cF^{(0)}\psi = \lambda \cF^{(0)}\psi.
\end{align*}
As a consequence, we obtain the modified eigenvalue problem which has the same eigenvalue $\lambda$ with \eqref{EVP_0},
\begin{align}
 \widehat{\cL} \widehat \psi=  \lambda \widehat \psi
\label{ModifiedEVP}
\end{align}
where
\begin{align}
 \widehat{\cL}=\cF^{(0)} \cG^{(0)}+\kappa I, \quad
 \widehat{\psi}=\cF^{(0)} \psi.
\end{align}
Here the operator $\cL$ can be expressed using the product of the first-order operators $\cG^{(0)}$ and $\cF^{(0)}$, 
and the transformed operator $\widehat{\cL}$ obtained by interchanging the order of $\cG^{(0)}$ and $\cF^{(0)}$ is 
the eigenvalue-preserving operator. 
It will be shown in the next subsection that the procedure here can be lifted to the case of generalized eigenvalue problems.

\subsection{Generalized eigenvalue problem}

In what follows, we discuss spectral transformations for 
the GEVP, 
which are used to derive an exceptional analog of BORF. 
The basic theory of spectral transformations for the 
GEVP has been given by Zhedanov \cite{BORF_GEVP}, 
where both the GEVP and the adjoint GEVP were considered.
Here we show that a reformulation through a decomposition of operator pencil in a similar manner to 
the procedure in the previous subsection allows for a more convenient discussion 
of the GEVP and the adjoint GEVP.

We start from the GEVP in the form
\begin{align}
\label{GEVP0}
 \cL_1 \psi = \lambda \cL_2 \psi
\end{align}
where $\cL_1, \cL_2$ are both second-order differential operators.
Here $(\lambda, \psi )$ is an eigen-pair of the eigenvalue $\lambda$ and the eigenfunction $\psi$. 
Spectral transformations of (\ref{GEVP0}) can be organized by using the decompositions of the operators $\cL_1$ and 
$\cL_2$, which is analogous to the Darboux-Crum transformation of the (standard) eigenvalue problem in 
the Sturm-Liouville form. 
Below we present the spectral transformations based on the decomposition of second-order differential operators only (since the spectral transformations concerning $q$-differential operators are essentially the same).

For bounded second-order differential operators $\cL_1$ and $\cL_2$ defined by 
\begin{align}
\label{opL1}
& \cL_1 = A_1(z) \partial_z^2 + B_1(z) \partial_z +C_1(z) I,\\
\label{opL2}
& \cL_2 = A_2(z) \partial_z^2 + B_2(z) \partial_z +C_2(z) I,
\end{align}
with functions $A_i(z), B_i(z), C_i(z)$, $i=1,2$, of $z$, 
we can decompose $\cL_1$ and $\cL_2$ into 
\begin{align}
 & \cL_1 = \widetilde{\phi}_1(z)\left(\cG_1 \cF + I \right) ,
\qquad
 \cL_2 = \widetilde{\phi}_2(z)\left(\cG_2 \cF + I\right),
\end{align}
where
\begin{align}
\label{opG1}
 \cG_1 &
= \dfrac{1}{\widetilde{\phi}_1(z)\phi(z)}
\left(A_1(z)\partial_z+B_1(z)\right)\phi(z)\epsilon(z) I\\
\label{opG2}
 \cG_2  &
= \dfrac{1}{\widetilde{\phi}_2(z)\phi(z)}
\left(A_2(z)\partial_z+B_2(z)\right)\phi(z)\epsilon(z) I\\
\label{opF}
 \cF &
=\dfrac{\phi(z)}{\varepsilon(z)}\partial_z\dfrac{1}{\phi(z)} I,
\end{align}
and 
\begin{align}
\label{tphi1}
\widetilde{\phi}_1(z)&=\dfrac{\cL_1\phi(z)}{\phi(z)}=\dfrac{A_1(z)\phi''(z)+B_1(z)\phi'(z)+C_1(z)\phi(z)}{\phi(z)}, \\
\label{tphi2}
\widetilde{\phi}_2(z)&=\dfrac{\cL_2\phi(z)}{\phi(z)}=\dfrac{A_2(z)\phi''(z)+B_2(z)\phi'(z)+C_2(z)\phi(z)}{\phi(z)}.
\end{align}
The GEVP (\ref{GEVP0}) can then be presented as
\begin{align}
\label{GEVP_0_1}
\widetilde{\phi}_1(z)\left(\cG_1\cF +I\right)\psi(z)
= \lambda \widetilde{\phi}_2(z) \left(\cG_2\cF + I\right) \psi(z).
\end{align}

By taking $\phi(z)$ as an eigenfunction with an eigenvalue $\kappa$ such that
\begin{align*}
 \left(\cL_1 -  \kappa \cL_2\right)\phi(z) =0,
\end{align*}
then 
\begin{align}
 \widetilde{\phi}_1(z) = \kappa \widetilde{\phi}_2(z).
\end{align}
Thus the GEVP (\ref{GEVP_0_1}) can be rewritten into 
\begin{align}
 & \kappa \left(\cG_1 \cF + I\right) \psi(z) = 
\lambda  \left(\cG_2 \cF + I\right)\psi(z).
\end{align}
Hereafter, we denote the eigenpairs of the GEVP \eqref{GEVP0} by $(\kappa, \phi(z))$.

Next, we consider the adjoint problem. Let $\cL_1^*$  and $\cL_2^*$ denote 
the adjoint operator of $\cL_1$  and $\cL_2$, respectively, 
then the adjoint problem of (\ref{GEVP0}) is given by
\begin{align}
(\cL_1- \tau \cL_2 )^*\psi^{\ast}(z)=
 (\cL_1^*- \tau \cL_2^* ) \psi^{\ast}(z) = 0,
\label{GEVPad}
\end{align}
where $(\tau, \psi^{\ast}(z) )$ is an eigenpair of the adjoint GEVP. 
Hereafter we denote the adjoint operators of $\cG_j, \cF_j$ by $\cG_j^*, \cF_j^*$, $j=1,2$, respectively.
Again, the adjoint operators $\cL_1^*$ and $\cL_2^*$ can be decomposed into
\begin{align}
\label{L1_ast}
\cL_1^* &= \partial_z^2A_1(z)  - \partial_zB_1(z) + C_1(z)I  
= \left(\cF^* \cG_1^* + I \right) \widetilde{\phi}_1(z) I,\\
\label{L2_ast}
\cL_2^* &=  \partial_z^2A_2(z)  - \partial_zB_2(z) + C_2(z)I  
= \left(\cF^* \cG_2^* + I \right) \widetilde{\phi}_2(z) I,
\end{align}
where
\begin{align}
\label{opaG1}
 \cG_1^* &= \phi(z)\epsilon(z)\left(-\partial_zA_1(z)+B_1(z)\right)\dfrac{1}{\widetilde{\phi}_1(z)\phi(z)}I,\\
\label{opaG2}
 \cG_2^* &= \phi(z)\epsilon(z)\left(-\partial_zA_2(z)+B_2(z)\right)\dfrac{1}{\widetilde{\phi}_2(z)\phi(z)}I,\\
\label{opaF}
 \cF^* &= -\dfrac{1}{\phi(z)} \partial_z \dfrac{\phi(z)}{\epsilon(z)} I.
\end{align}

As a consequence, the GEVP (\ref{GEVP0}) and the adjoint GEVP (\ref{GEVPad}) can be rewritten as
\begin{align}
\label{GEVP_decomp}
 &(\kappa \cG_1-\lambda \cG_2) {\cF} \psi(z) = (\lambda - \kappa)\psi(z), \\
\label{GEVPad_decomp}
 &{\cF^*} (\kappa \cG_1^*-\tau\, \cG_2^*) \widetilde{\phi}_2(z) \psi^{\ast}(z) = (\tau - \kappa){\widetilde{\phi}_2(z)\psi^{\ast}(z)},
\end{align}
respectively.

By introducing the transformed functions
\begin{align}
\label{DT_f1}
 \widehat{\psi}(z)= {\cF}{\psi}(z), \qquad
 \widehat{\psi^{\ast}}(z)= \left(\kappa \cG_1^* - \tau\cG_2^*\right)\widetilde{\phi}_2(z)\,\psi^{\ast}(z), 
\end{align}
the GEVPs (\ref{GEVP_decomp}) and (\ref{GEVPad_decomp}) can be rewritten into 
\begin{align}
\label{backward_hpsi}
 & (\kappa \cG_1-\lambda \cG_2) \widehat{{\psi}}(z) = (\lambda - \kappa) {\psi(z)}, \\
\label{backward_hchi}
 &{\cF^*} \widehat{\psi^{\ast}}(z) = (\tau - \kappa) \widetilde{\phi}_2(z){\psi^{\ast}(z)},
\end{align}
where the left-hand sides can be seen as backward operators from $\widehat \psi(z)$ to $\psi(z)$ 
and from $\widehat \psi^{\ast}(z)$ to $\widetilde\phi_2(z) \psi^{\ast}(z)$, respectively.
Moreover, if we apply $\cF$ and $\kappa \cG_1^*- \tau \cG_2^*$ to 
(\ref{GEVP_decomp}) and (\ref{GEVPad_decomp}), respectively, 
then we obtain
\begin{align}
\label{mGEVP_00}
 & {\cF} (\kappa \cG_1-\lambda \cG_2) \widehat{{\psi}}(z) = (\lambda - \kappa) \widehat{{\psi}}(z), \\
\label{maGEVP_00}
 &(\kappa \cG_1^*-\tau \cG_2^*) {\cF^*}\, \widehat{\psi^{\ast}}(z) = (\tau - \kappa) \widehat{\psi^{\ast}}(z).
\end{align}

Let us introduce the transformed operators defined by
\begin{align}
\label{factor_L0}
&\widehat{\cL}_1 = \kappa\left({\cF} \cG_1 + I\right),
\hspace{6mm} \widehat{\cL}_2 = {\cF} \cG_2 + I,  \\
\label{factor_L1}
&\widehat{\cL}_1^* = \kappa\left(\cG_1^* \cF^*  + I\right),
\hspace{4mm} \widehat{\cL}_2^* = \cG_2^* \cF^* + I. 
\end{align}
Then from (\ref{mGEVP_00}) and (\ref{maGEVP_00}) we obtain 
the transformed GEVPs:
\begin{align}
\label{XL1_ast}
 \left(\widehat{\cL}_1 - \lambda \widehat{\cL}_2\right) \widehat{\psi} = 0, \qquad
 \left(\widehat{\cL}_1^* - \tau \widehat{\cL}_2^*\right) \widehat{\psi^{\ast}} = 0.
\end{align}
The Darboux transformed operators can be written down explicitly as follow. 

\begin{prop}
\label{prop_1step}
The Darboux transformed operators $\widehat{\cL}_1$ and $\widehat{\cL}_2$ can be expressed as 
\begin{align}
&\widehat{\cL}_1 = \kappa\left( \widehat{A}_1(z) \partial_z^2 + \widehat{B}_1(z) \partial_z + \widehat{C}_1(z) I \right)\\
&\widehat{\cL}_2 = \widehat{A}_2(z) \partial_z^2 + \widehat{B}_2(z) \partial_z + \widehat{C}_2(z) I
\end{align}
where for $j=1,2$, 
\begin{align}
& \widehat{A}_{j}(z)
=\dfrac{A_j(z)}{\widetilde{\phi}_j(z)} , \\
& \widehat{B}_{j}(z)
=\dfrac{B_j(z)}{\widetilde{\phi}_j(z)}+2\dfrac{\epsilon'(z)}{\epsilon(z)}\dfrac{A_j(z)}{\tilde{\phi}_j(z)}
+\left(\dfrac{A_j(z)}{\widetilde{\phi}_j(z)}\right)',  \\
& \widehat{C}_{j}(z) 
=1+\left(\dfrac{B_j(z)}{\widetilde{\phi}_j(z)}\right)'
-2\dfrac{A_j(z)}{\tilde{\phi}_j(z)}\left(\dfrac{\phi'(z)}{\phi(z)}\right)^2
+\left(\dfrac{A_j(z)(\epsilon'(z)+\phi'(z))}{\widetilde{\phi}_j(z)}\right)' \\
& \hspace{16.4mm} 
+\dfrac{B_j(z)}{\widetilde{\phi}_j(z)}\left(\dfrac{\epsilon'(z)}{\epsilon(z)}-\dfrac{\phi'(z)}{\phi(z)}\right), \nonumber
\end{align}
and $\widetilde{\phi}_j(z)$, $j=1,2$, are defined by (\ref{tphi1}) and (\ref{tphi2}). 
\end{prop}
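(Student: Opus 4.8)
\textit{Proof plan.} By the definitions \eqref{factor_L0}, $\widehat{\cL}_1=\kappa(\cF\cG_1+I)$ and $\widehat{\cL}_2=\cF\cG_2+I$, so the assertion reduces to computing the operator product $\cF\cG_j$ for $j=1,2$ and reading off its coefficients as a second-order differential operator; the stated $\widehat C_j$ will then be the zeroth-order coefficient augmented by the $1$ coming from $+I$, and the $j=1$ case only differs by the overall scalar $\kappa$. The plan is to put both first-order factors in expanded form and then compose. From \eqref{opG1}--\eqref{opG2}, applying the product rule to $\phi(z)\epsilon(z)$ gives
\[
\cG_j=P_j(z)\partial_z+Q_j(z)I,\qquad
P_j=\frac{A_j\epsilon}{\widetilde\phi_j},\qquad
Q_j=\frac{\epsilon}{\widetilde\phi_j}\!\left(A_j\frac{\phi'}{\phi}+A_j\frac{\epsilon'}{\epsilon}+B_j\right),
\]
and from \eqref{opF}, $\cF=\frac{1}{\varepsilon}\bigl(\partial_z-\tfrac{\phi'}{\phi}\bigr)I$. (I note here the one point worth flagging in the write-up: for $\widehat A_j$ to come out equal to $A_j/\widetilde\phi_j$ the decoupling factors $\epsilon$ in $\cG_j$ and $\varepsilon$ in $\cF$ must be the same function, so I will set $\varepsilon\equiv\epsilon$ throughout.)

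Next I would carry out the composition on a test function $f$: since $\cG_j f=P_jf'+Q_jf$,
\[
\cF\cG_j f=\frac{1}{\epsilon}\Bigl[(P_jf'+Q_jf)'-\tfrac{\phi'}{\phi}(P_jf'+Q_jf)\Bigr]
=\frac{P_j}{\epsilon}f''+\frac{1}{\epsilon}\Bigl(P_j'+Q_j-\tfrac{\phi'}{\phi}P_j\Bigr)f'+\frac{1}{\epsilon}\Bigl(Q_j'-\tfrac{\phi'}{\phi}Q_j\Bigr)f.
\]
The $\partial_z^2$-coefficient is $P_j/\epsilon=A_j/\widetilde\phi_j=\widehat A_j$ at once. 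In the $\partial_z$-coefficient the two occurrences of $\tfrac{\phi'}{\phi}\tfrac{A_j\epsilon}{\widetilde\phi_j}$ (one inside $Q_j$, one from $-\tfrac{\phi'}{\phi}P_j$) cancel, and what remains is $\epsilon\bigl(\tfrac{A_j}{\widetilde\phi_j}\bigr)'+2\epsilon'\tfrac{A_j}{\widetilde\phi_j}+\tfrac{B_j\epsilon}{\widetilde\phi_j}$; dividing by $\epsilon$ reproduces $\widehat B_j$ exactly. For the zeroth-order coefficient one expands $Q_j'-\tfrac{\phi'}{\phi}Q_j$, divides by $\epsilon$, adds $1$, and regroups the terms involving the logarithmic derivatives $\epsilon'/\epsilon$ and $\phi'/\phi$ into the closed form for $\widehat C_j$; this is the only laborious step. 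Finally, multiplying through by $\kappa$ for $j=1$ gives the two displayed operators.

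The only obstacle is organizational rather than conceptual: tracking which factor each $\partial_z$ acts on in the noncommutative products $\cF\cG_j$, and then arranging the several $\epsilon$- and $\phi$-derivative contributions in $Q_j'-\tfrac{\phi'}{\phi}Q_j$ into precisely the grouping shown for $\widehat C_j$ (there is genuine freedom in how that coefficient is displayed, so some care is needed to land on the stated expression). Everything else is a routine, if somewhat lengthy, application of the product and quotient rules.
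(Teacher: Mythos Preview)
Your proposal is correct and is exactly the computation the paper leaves implicit: the paper states Proposition~\ref{prop_1step} without proof, treating it as a direct (if tedious) expansion of $\cF\cG_j+I$ from the definitions \eqref{opG1}--\eqref{opF} and \eqref{factor_L0}. Your observation that $\varepsilon$ and $\epsilon$ must denote the same decoupling factor is accurate and worth flagging; the verification of $\widehat A_j$ and $\widehat B_j$ is clean as you describe, and the $\widehat C_j$ step is indeed only a matter of bookkeeping the $\epsilon'/\epsilon$ and $\phi'/\phi$ terms in $Q_j'-\tfrac{\phi'}{\phi}Q_j$.
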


\subsection{Biorthogonality and GEVP}

The relationship between the biorthogonality of BORF and the related GEVP 
as well as the adjoint GEVP
has been discussed in \cite{BORF_GEVP}. 
Let $\langle \cdot, \cdot \rangle$ be an inner product. 
From the GEVPs \eqref{GEVP0} and \eqref{GEVPad}, we formally obtain the biorthogonality relation:
\begin{align}
\label{biorth_0}
 \langle \psi, \cL_2^*\psi^{\ast} \rangle = h \delta_{\lambda,\tau},
\end{align}
with some constant $h$. 
In fact, this follows from
\begin{align}
\label{adness}
(\tau-\lambda)\langle \psi, \cL_2^*\psi^{\ast} \rangle
=\langle \psi, (\cL_1^*-\lambda\cL_2^*)\psi^{\ast} \rangle
=\langle (\cL_1-\lambda\cL_2)\psi, \psi^{\ast} \rangle=0. 
\end{align}

Similarly, we can obtain the biorthogonality relation of $\widehat{\psi}$ and $\widehat{\cL}_2^*\widehat{\psi^{\ast}}$:
\begin{align}
\label{Xbiorth}
\langle \widehat{\psi}, \widehat{\cL}_2^*\widehat{\psi^{\ast}}\rangle 
=(\tau - \kappa) \langle \psi, \cL_2^*\psi^{\ast} \rangle
=(\tau - \kappa)  h \delta_{\lambda,\tau} \quad (\mbox{if }\lambda \ne \kappa),
\end{align}
which is shown by
\begin{align}
\label{adness_X}
 \langle \widehat{\psi}, \widehat{\cL}_2^*\widehat{\psi^{\ast}} \rangle
 = \langle \cF \psi, \kappa(\cG_1^* -\cG_2^*)\widetilde{\phi}_2\psi^{\ast} \rangle
 = \langle \psi, (\cL_1^* -\kappa \cL_2^*)\psi^{\ast} \rangle
 = (\tau - \kappa )\langle \psi, \cL_2^*\psi^{\ast} \rangle.
\end{align}
Note that we have used the following relations 
\begin{align*}
& 
\cL_1^* =  \kappa \left(\cF^* \cG_1^* + I \right) \widetilde{\phi}_2 I,\quad
\cL_2^* = \left(\cF^* \cG_2^* + I \right) \widetilde{\phi}_2 I,
\\
&\widehat{\cL}_1^* = \kappa\left(\cG_1^* \cF^* + I\right),
\quad
\widehat{\cL}_2^* = \cG_2^* \cF^* + I, 
\end{align*}
and
\begin{align}
  \widehat{\cL}_2^* \widehat{\psi^{\ast}}
 &= \left(\cG_2^* \cF^* + I\right)\widehat{\psi^{\ast}}
  = (\tau-\kappa)\cG_2^* \widetilde{\phi}_2\psi^{\ast}
+(\kappa\cG_1^* -\tau\cG_2^*)\widetilde{\phi}_2\psi^{\ast}
= \kappa(\cG_1^* -\cG_2^*)\widetilde{\phi}_2\psi^{\ast} .
\end{align}

\section{Hendriksen-van Rossum polynomials}
In this section, we consider a sequence of LBP 
which was first introduced by Askey \cite{A82,A85} in 1982, 
and then discovered again independently by Hendriksen and van Rossum \cite{HR} in 1986. 
This LBP was also mentioned by Gr\"unbaum, Vinet and Zhedanov \cite{CLBP, GVZ04, ABP}. 
Following the notations of \cite{CLBP} we call them the Hendriksen–van Rossum polynomials, 
and HR polynomials for short. 
HR polynomials are believed to be the simplest LBP and can be expressed in terms of the Gauss hypergeometric function: 
\begin{eqnarray}
\label{HR_P}
&&P_n(z):=P_n(z;\alpha,\beta)=\dfrac{(\beta)_n}{(\alpha+1)_n}
~_2F_1\left(
\begin{gathered}
-n, \alpha+1 \\
1-\beta-n
\end{gathered}
;z
\right),  \\
\label{HR_Q}
&&Q_n(z):=Q_n(z;\alpha,\beta)=P_n(z,\beta,\alpha), 
\end{eqnarray}
where $\alpha, \beta$ are real parameters, 
and $\{Q_n(z)\}$ are the biorthogonal partners of $\{P_n(z)\}$. 
The definition of the Gauss hypergeometric function is 
\[
~_rF_s\left(
\begin{gathered}
a_1, a_2, \ldots a_r \\
b_1, b_2, \ldots, b_s
\end{gathered}
;z
\right)=
\sum^{\infty}_{n=0}
\dfrac{(a_1,a_2,\ldots,a_r)_n}{n!(b_1,b_2,\ldots,b_s)_n}z^n, 
\]
where $(a)_n$ is the shifted factorial (or the standard Pochhammer symbol) defined by
\[
(a)_0=1, \quad
(a)_n:=a(a+1)\cdots(a+n-1), \quad n=1,2,\ldots
\]

It is indicated in \cite{HR_1} that HR polynomials are the only LBP which belong to the Hahn class under 
the additional condition that the derivatives $Y'(z)$ of the ``dual'' polynomials $Y_n(z)=P_{n+1}(z)- zP_n(z)$ also form LBP. 
In this sense, HR polynomials can also be called ``classical''. 
Here we introduce some important classical properties of the HR polynomials \cite{A82, A85, HR, HR_1}. 

HR polynomials satisfy the three-term recurrence relation: 
\begin{equation}
\label{TTRR_HR}
P_{n+1}(z)+d_nP_n(z)=z(P_n(z)+b_nP_{n-1}(z)), 
\end{equation}
which can be seen as a GEVP with eigenvalue $z$, 
and  
\begin{equation}
\label{TTRR_Coe_HR}
d_n=-\dfrac{n+\beta}{n+\alpha+1}, \quad
b_n=-\dfrac{n(n+\alpha+\beta)}{(n+\alpha)(n+\alpha+1)}. 
\end{equation}

HR polynomials are polynomial solutions of the linear second-order differential equation: 
\begin{equation}
\label{Deq_HR}
\left[z(1-z)\partial_z^2+(1-\beta-n-(2+\alpha-n)z)\partial_z\right]P_n(z)=-n(\alpha+1)P_n(z). 
\end{equation}
This equation can also be rewritten into a GEVP: 
\begin{equation}
\label{GEVP_HR}
L_1P_n(z)=\theta_nL_2P_n(z), \quad \theta_n=n, 
\end{equation}
with 
\begin{align}
\label{opL}
L_1=A_1(z)\partial_z^2+B_1(z)\partial_z, \quad L_2=B_2(z)\partial_z+C_2(z)I, 
\end{align}
where 
\begin{equation*}
\label{opL_ABC}
A_1(z)=z(1-z), \quad B_1(z)=(1-\beta-(2+\alpha)z), \quad 
B_2(z)=(1-z), \quad C_2(z)=-(\alpha+1). 
\end{equation*}
And it turns out that 
\begin{align}
\label{opL1_P}
&L_1P_n(z)=-n(n+\alpha+1)P_n(z; \alpha+1, \beta-1), \\
\label{opL2_P}
&L_2P_n(z)=-(n+\alpha+1)P_n(z; \alpha+1, \beta-1). 
\end{align}

HR polynomials satisfy a biorthogonality relation on the unit circle: 
\begin{equation}
\label{Biorth_HR}
\dfrac{1}{2\pi i}\int_{|z|=1}P_n(z)Q_m(1/z)w(z)\dfrac{dz}{z}
=\dfrac{1}{2\pi}\int_{0}^{2\pi}P_n(e^{ix})Q_m(e^{-ix})w(e^{ix})dx
=h_n\delta_{mn}, 
\end{equation}
where  
\begin{equation}
\label{w_HR}
w(z)=(-z)^{-\beta}(1-z)^{\alpha+\beta}, \quad
h_n=\dfrac{(\alpha+n+1)_{\infty}(\beta+n+1)_{\infty}}{(n+1)_{\infty}(\alpha+\beta+n+1)_{\infty}},
\end{equation}
and the branch of $(-z)^{-\beta}$ and of $(1-z)^{\alpha+\beta}$ is chosen as follow: 
\begin{eqnarray*}
&&(-z)^{-\beta}=|z|^{-\beta} \text{ if arg}z=\pi (0<\text{arg}z<2\pi), \\
&&(1-z)^{\alpha+\beta}=|1-z|^{\alpha+\beta}\text{ if arg}(1-z)=0 (-\pi<\text{arg}(1-z)<\pi). 
\end{eqnarray*}
An analog of the Christoffel-Darboux formula for LBP is given (\cite{PASI}, Proposition 1.) by 
\begin{equation}
\label{CD_formula}
\dfrac{P_{n+1}(x)Q_n(1/y)-(x/y)^nP_{n+1}(y)Q_n(1/x)}{\tilde{h}_n}
=(x-y)\sum^{n}_{k=0}\dfrac{P_k(x)Q_k(1/y)}{\tilde{h}_k},
\end{equation}
where 
\begin{equation}
\label{CD_formula_h}
\tilde{h}_0=1, \quad
\tilde{h}_n=\prod^{n-1}_{k=0}(1-P_{k+1}(0)Q_{k+1}(0)),  \quad n=1,2,\ldots
\end{equation}
For the HR polynomials, we have 
\[
\tilde{h}_n=\dfrac{(1)_n(\alpha+\beta+1)_n}{(\alpha+1)_n(\beta+1)_n}
=\dfrac{(1)_{\infty}(\alpha+\beta+1)_{\infty}}{(\alpha+1)_{\infty}(\beta+1)_{\infty}}h_n, 
\]
which corresponds to the weight function $w(z)$ with a constant multiplier. 

The positive-definiteness of the linear functional defined by (\ref{Biorth_HR}) is equivalent to $h_n>0, n=0,1,\ldots$, 
which can be guaranteed by a stronger condition: 
\begin{equation}
\label{cond_conv_w}
\alpha>-1, \quad \beta>-1, \quad \alpha+\beta>-1. 
\end{equation}

The moments related to the weight function $w(z)$ are
\[
c_n=\dfrac{1}{2\pi i}\int_{|z|=1}z^nw(z)\dfrac{dz}{z}
=\dfrac{\sin{(\beta\pi)}\Gamma(n-b)}{\sin{((\alpha+\beta)\pi)}\Gamma(-(\alpha+\beta))\Gamma(1+n+\alpha)}, \quad
n=0,1,2,\ldots
\]
under the condition $\Re{(\alpha+\beta)}>-1$, which is equivalent to $\alpha+\beta>-1$ since $\alpha, \beta$ are real. 

We also find the relationship between the coefficients of $L_1$ and the weight function $w(z)$: 
\begin{equation}
\label{w_AB}
\dfrac{w'(z)}{w(z)}=-\dfrac{\alpha z+\beta}{z(1-z)}
=\dfrac{B_1(z)-A'_1(z)}{A_1(z)},
\end{equation}
which can be rewritten into a Pearson equation
\begin{equation}
\label{w_AB_r}
(A_1(z)w(z))'=B_1(z)w(z). 
\end{equation}
This Pearson equation is indeed the same one as in the case of classical orthogonal polynomials. 

Here we give some useful formulas considering the HR polynomials with transformed parameters or their derivatives. 
\begin{lemma}
\label{formula_HR}
The following relations are satisfied by the HR polynomials: 
\begin{align}
\label{formula_HR_1}
&z^nP_n(z^{-1}; \alpha, \beta)=\dfrac{(\beta)_n}{(\alpha+1)_n}P_n(z; \beta-1,\alpha+1), \\
\label{formula_HR_2}
&P'_n(z)=nP_{n-1}(z;\alpha+1,\beta), \\
\label{formula_HR_3}
&\dfrac{P'_n(z;\beta-1,\alpha+1)}{P_n(z;\beta-1,\alpha+1)}
=\dfrac{n}{z}\left[1-\dfrac{(1+\alpha)}{(n-1+\beta)}\dfrac{P_{n-1}(z;\beta-1,\alpha+2)}{P_n(z;\beta-1,\alpha+1)}\right], \\
\label{formula_HR_4}
&\dfrac{P'_n(z; -\alpha-1,-\beta+1)}{P_n(z; -\alpha-1,-\beta+1)}
=\dfrac{n}{z}\left[1-\dfrac{P_{n-1}(z^{-1}; -\beta+1,-\alpha)}{zP_n(z^{-1}; -\beta,-\alpha)}\right].
\end{align}
\end{lemma}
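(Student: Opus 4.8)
The plan is to reduce each of the four identities to elementary manipulations of the defining terminating series
\[
P_n(z;\alpha,\beta)=\frac{(\beta)_n}{(\alpha+1)_n}\sum_{k=0}^{n}\frac{(-n)_k(\alpha+1)_k}{k!\,(1-\beta-n)_k}\,z^k .
\]
After clearing denominators each claim becomes an identity between polynomials (Laurent polynomials in the case of \eqref{formula_HR_4}) of bounded degree, so it is enough to compare coefficients. I would prove \eqref{formula_HR_1} and \eqref{formula_HR_2} directly from this series, reduce \eqref{formula_HR_3} --- with the help of \eqref{formula_HR_2} --- to a two-parameter contiguous relation that is checked term by term, and finally obtain \eqref{formula_HR_4} by combining \eqref{formula_HR_1} with \eqref{formula_HR_3}.

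For \eqref{formula_HR_1} I would reverse the summation in $z^nP_n(z^{-1};\alpha,\beta)$ via $k\mapsto n-k$ and rewrite every reflected Pochhammer symbol using $(a)_{n-k}=(-1)^k(a)_n/(1-a-n)_k$, together with $(1-\beta-n)_n=(-1)^n(\beta)_n$ and $(-n)_n=(-1)^n n!$. The prefactor $(\beta)_n/(\alpha+1)_n$ then cancels against the reflected factors and the surviving sum is ${}_2F_1(-n,\beta;-\alpha-n;z)$, which is exactly $\tfrac{(\alpha+1)_n}{(\beta)_n}P_n(z;\beta-1,\alpha+1)$. For \eqref{formula_HR_2} I would differentiate the series termwise, reindex $k\mapsto k+1$, and pull the factors $(-n)$, $(\alpha+1)$ and $(1-\beta-n)^{-1}$ out of the shifted Pochhammer symbols; what remains is the hypergeometric part of $P_{n-1}(z;\alpha+1,\beta)$, and comparing the scalar prefactors (using $(\beta)_n/(1-\beta-n)=-(\beta)_{n-1}$ and $(\alpha+1)_n=(\alpha+1)(\alpha+2)_{n-1}$) produces the factor $n$.

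For \eqref{formula_HR_3} I would multiply through by $z\,P_n(z;\beta-1,\alpha+1)/n$ and substitute \eqref{formula_HR_2} with parameters $(\beta-1,\alpha+1)$ for the derivative, reducing the claim to the polynomial identity
\[
z\,P_{n-1}(z;\beta,\alpha+1)=P_n(z;\beta-1,\alpha+1)-\frac{\alpha+1}{\,n-1+\beta\,}\,P_{n-1}(z;\beta-1,\alpha+2).
\]
Written as ${}_2F_1$'s, the two polynomials on the right share the same denominator parameter and differ only in one numerator parameter ($-n$ versus $1-n$); the elementary identity $(-n)_k-(1-n)_k=-k\,(1-n)_{k-1}$, a shift of the summation index, and the evaluation $(\beta+n-1)/(1-\beta-n)=-1$ then collapse both sides to the same multiple of ${}_2F_1(1-n,\beta+1;1-\alpha-n;z)$, after a short check that the three hypergeometric prefactors combine correctly. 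Finally, for \eqref{formula_HR_4} I would set $\hat\alpha=-\alpha-1$, $\hat\beta=-\beta+1$ and use \eqref{formula_HR_1} to rewrite the two $z^{-1}$-arguments on the right --- once at degree $n$ with parameters $(\hat\beta-1,\hat\alpha+1)$, once at degree $n-1$ with parameters $(\hat\beta,\hat\alpha+1)$. The accompanying powers of $z$ cancel exactly, the ratio of the two Pochhammer prefactors simplifies to $\hat\beta/(\hat\alpha+n)$, and the right-hand side becomes $\frac{n}{z}\left[1-\frac{\hat\beta}{\hat\alpha+n}\frac{P_{n-1}(z;\hat\alpha,\hat\beta+1)}{P_n(z;\hat\alpha,\hat\beta)}\right]$, which is precisely \eqref{formula_HR_3} read with $(\alpha,\beta)=(\hat\beta-1,\hat\alpha+1)$.

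All of this is routine algebra; the steps that require care are the Pochhammer bookkeeping in \eqref{formula_HR_1} (keeping straight which argument each symbol belongs to after the reflection) and, in \eqref{formula_HR_4}, matching the parameter shifts with the compensating powers of $z$ when \eqref{formula_HR_1} is applied. The one genuinely structural point is the reduction of \eqref{formula_HR_3} to the displayed contiguous relation, and I expect that to be the main obstacle; everything else is reindexing and cancellation.
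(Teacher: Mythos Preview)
The paper states Lemma~\ref{formula_HR} without proof, treating the four identities as known properties of the HR polynomials (they are standard consequences of the ${}_2F_1$ representation). Your proposal supplies a correct and self-contained derivation: \eqref{formula_HR_1} and \eqref{formula_HR_2} follow by the reflection $k\mapsto n-k$ and termwise differentiation of the defining series, \eqref{formula_HR_3} reduces via \eqref{formula_HR_2} to a contiguous ${}_2F_1$ relation of the type $(-n)_k-(1-n)_k=-k(1-n)_{k-1}$, and \eqref{formula_HR_4} is obtained from \eqref{formula_HR_3} by the substitution $(\hat\alpha,\hat\beta)=(-\alpha-1,-\beta+1)$ after converting the $z^{-1}$-arguments through \eqref{formula_HR_1}. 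The parameter bookkeeping in that last step (checking that the Pochhammer ratio collapses to $\hat\beta/(\hat\alpha+n)$ and that the compensating powers of $z$ cancel) is exactly as you describe, so nothing is missing.
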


\subsection{Quasi-Laurent-polynomial eigenfunctions associated with HR polynomials}

In the spectral transformations of GEVP introduced in section 2, the function $\phi(z)$ 
which serves as an eigenfunction of the original GEVP plays an important role. 
In fact, one can choose the function $\phi(z)$ as a quasi-polynomial eigenfunction of the GEVP: 
\[
 \left(\cL_1 -  \kappa \cL_2\right)\phi(z) =0, \quad \phi(z)=\xi(z)p(z), 
\] 
where $\xi(z)$ is a gauge function and $p(z)$ is a polynomial. 

\begin{prop}
\label{quasi_ef}
The GEVP (\ref{GEVP_HR}) has two classes of quasi-polynomial eigenfunctions: 
\begin{align}
\label{quasi}
(L_1-\theta^{(j)}_nL_2)\phi^{(j,n)}(z)=0, \quad
\phi^{(j,n)}(z)=\xi_j(z)p^{(j)}_n(z), \quad j=1,2, 
\end{align}
which can be expressed in terms of
\begin{align}
\label{quasi_1}
&\xi_1(z)=1, \quad p^{(1)}_n(z)=P_n(z), \quad \theta^{(1)}_n=n; \hspace{28.5mm} \\
\label{quasi_2}
&\xi_2(z)=(1-z)^{-\alpha-\beta}, \quad 
p^{(2)}_n(z)=P_n(z;-\beta, -\alpha), \quad 
\theta^{(2)}_n=n-\alpha-\beta. 
\end{align}
\end{prop}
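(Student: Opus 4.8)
The plan is to verify the two families separately by direct substitution, handling the gauge factor in the $j=2$ case by conjugating the operator pencil $L_1-\theta L_2$.

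For $j=1$ the claim is that $\bigl(L_1-nL_2\bigr)P_n(z)=0$, which is just the differential equation \eqref{Deq_HR} rewritten in pencil form: since $1-\beta-n-(2+\alpha-n)z = B_1(z)-n(1-z)$, splitting off the term $-n(1-z)P_n'(z)$ on the left of \eqref{Deq_HR} gives $L_1P_n = n\bigl[(1-z)P_n'-(\alpha+1)P_n\bigr]=n\bigl[B_2P_n'+C_2P_n\bigr]=n\,L_2P_n$; equivalently this is immediate from \eqref{opL1_P}--\eqref{opL2_P}. Hence $\phi^{(1,n)}=P_n$, $\theta^{(1)}_n=n$.

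For $j=2$ write $\tilde L_i:=\xi_2^{-1}L_i\xi_2$, $i=1,2$, with $\xi_2(z)=(1-z)^{-\alpha-\beta}$, so that $\bigl(L_1-\theta L_2\bigr)\bigl(\xi_2p\bigr)=\xi_2\bigl(\tilde L_1-\theta\tilde L_2\bigr)p$ and it suffices to prove $\bigl(\tilde L_1-\theta^{(2)}_n\tilde L_2\bigr)p=0$ for $p(z)=P_n(z;-\beta,-\alpha)$. Using $\xi_2'/\xi_2=(\alpha+\beta)/(1-z)$ and the conjugation rule $\xi_2^{-1}\partial_z\xi_2=\partial_z+\xi_2'/\xi_2$, one computes $\tilde L_1$ and $\tilde L_2$ explicitly; the factors $(1-z)^{-1}$ introduced by the gauge cancel against $A_1(z)=z(1-z)$ and $B_2(z)=1-z$, and the outcome is the pair of operator identities
\[
\tilde L_2 = (1-z)\partial_z+(\beta-1)I,\qquad
\tilde L_1 = \bigl[z(1-z)\partial_z^2+(1+\alpha-(2-\beta)z)\partial_z\bigr]-(\alpha+\beta)\tilde L_2 .
\]
The bracketed second-order operator and $\tilde L_2$ are exactly $L_1$ and $L_2$ of \eqref{opL} under the parameter replacement $(\alpha,\beta)\mapsto(-\beta,-\alpha)$, so $\tilde L_1-\theta\tilde L_2$ equals the HR pencil with parameters $(-\beta,-\alpha)$ and eigenvalue parameter $\theta+\alpha+\beta$. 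By \eqref{GEVP_HR} applied with parameters $(-\beta,-\alpha)$, this operator annihilates $P_n(z;-\beta,-\alpha)$ precisely when $\theta+\alpha+\beta=n$, i.e. $\theta^{(2)}_n=n-\alpha-\beta$, as claimed.

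The step I expect to be the main obstacle is the conjugation bookkeeping in the $j=2$ case, specifically checking that the zeroth- and first-order coefficients of $\tilde L_1$ are genuinely polynomial rather than merely rational. The decisive simplification is that the zeroth-order coefficient of $\tilde L_1$ is $A_1(g'+g^2)+B_1g$ with $g=(\alpha+\beta)/(1-z)$, whose numerator collapses to $(\alpha+\beta)(1-\beta)(1-z)$ and so cancels the denominator, while $B_2g+C_2=(\alpha+\beta)-(\alpha+1)=\beta-1$. That $\xi_2=(1-z)^{-\alpha-\beta}$ is the gauge that makes this work is no accident: the Frobenius exponents of the pencil $L_1-\theta L_2$ at the regular singular point $z=1$ are $0$ and $-(\alpha+\beta)$, and they do not depend on $\theta$ because $L_2$ contributes only lower-order terms there; the exponent $0$ returns the $j=1$ family and the exponent $-(\alpha+\beta)$ produces the gauge $\xi_2$ of the $j=2$ family. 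One could alternatively establish the $j=2$ identity via the hypergeometric representation \eqref{HR_P} together with the contiguous relations of Lemma \ref{formula_HR}, but the conjugation route keeps the computation shortest.
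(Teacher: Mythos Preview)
Your proof is correct. For $j=1$ it reduces immediately to \eqref{GEVP_HR}, and for $j=2$ the conjugation bookkeeping is accurate: the first-order coefficient of $\tilde L_1$ is indeed $1-\beta+(\alpha+2\beta-2)z$ and the zeroth-order one is $(\alpha+\beta)(1-\beta)$, so the identity $\tilde L_1=L_1\big|_{(\alpha,\beta)\to(-\beta,-\alpha)}-(\alpha+\beta)\tilde L_2$ holds and the rest follows.

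Your approach differs from the paper's in spirit. The paper treats the gauge $\xi(z)$ as unknown, imposes that both conjugated operators $\xi^{-1}L_i\xi$ have polynomial coefficients (up to a common factor $\eta$), and solves the resulting system of functional equations for $\xi'/\xi$; this \emph{derives} the two gauges $\xi=1$ and $\xi=(1-z)^{-\alpha-\beta}$ as the only possibilities. You instead take $\xi_2$ as given and \emph{verify} by direct conjugation that it maps the pencil to the HR pencil with parameters $(-\beta,-\alpha)$. Your route is shorter and fully adequate for the proposition as stated (which asserts existence), and your Frobenius-exponent remark at $z=1$ nicely explains where $\xi_2$ comes from; the paper's route, on the other hand, establishes that these are the \emph{only} quasi-polynomial families, which is slightly more than the statement demands but is used implicitly later when the paper enumerates seed functions.
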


\begin{proof}
Suppose that $(\xi(z)p(z), \kappa)$ is an eigenpair of the GEVP (\ref{GEVP_HR}): 
\[
L_1\xi(z)p(z)=\kappa L_2\xi(z)p(z). 
\]
To guarantee that $p(z)$ is a polynomial, it suffices to require that 
\begin{align}
&\xi^{-1}(z)L_1\xi(z)=\eta(z)\left(z(1-z)\partial_z^2+(d+ez)\partial_z+g\cdot I\right), \\
&\xi^{-1}(z)L_2\xi(z)=\eta(z)\left((1-z)\partial_z+h\cdot I\right),
\end{align}
where $d,e,g,h$ are real constants. 
Then we have $\eta(z)=1$, and 
\[
d=1-\beta, \quad
e=\alpha+2h, \quad
\dfrac{\xi'(z)}{\xi(z)}=\dfrac{1+\alpha+h}{1-z},
\]
\[
z(1-z)\dfrac{\xi''(z)}{\xi(z)}=\dfrac{(1+\alpha+\beta)(1+\alpha+h)}{1-z}+g-(2+\alpha)(1+\alpha+h), 
\]
which leads to
\[
\xi(z)=1, \quad e=-2-\alpha, \quad h=-(1+\alpha), \quad g=0, 
\] 
or 
\[
\xi(z)=\dfrac{1}{(1-z)^{\alpha+\beta}}, \quad
e=\alpha+2\beta-2, \quad
h=\beta-1, \quad g=(\alpha+\beta)(1-\beta). 
\]
In the first case, the polynomial part $p(z)$ is an HR polynomial, i.e., $(p(z), \kappa)=(P_n(z), n)$ for 
some non-negative integer $n$. 
In the second case, the polynomial part $p(z)$ is an HR polynomial with the parameters $\alpha, \beta$ replaced by 
$-\beta, -\alpha$, say $p(z)=P_n(z;-\beta,-\alpha)$ for some non-negative integer $n$, 
and the eigenvalue is $n-(\alpha+\beta)$. 
\end{proof}

However, these are not the only eigenfunctions of the GEVP (\ref{quasi}). 
What surprised us was that there exist eigenfunctions 
which are products of a gauge factor and a Laurent polynomial part. 
This kind of eigenfunction has not appeared in the case of COP. 
We call them quasi-Laurent-polynomial eigenfunctions. 

In fact, if we replace $z$ by $z^{-1}$ in the GEVP (\ref{quasi}), then 
\[
(L_1-\theta^{(j)}_nL_2)\phi^{(j,n)}(z)|_{z\rightarrow z^{-1}}=0, \quad j=1,2, 
\]
where the following transformations appear on the left side
\[
\partial_{z^{-1}}=-z^2\partial_z, \quad
\partial_{z^{-1}}^2=z^4\partial_z^2+2z^3\partial_z. 
\]
It turns out that the operator $(L_1-\theta^{(j)}_nL_2)|_{z\rightarrow z^{-1}}$ can be expressed as 
\begin{align}
\label{rela_L_Linv}
\xi_j^{-1}(z^{-1})\left((L_1-\theta^{(j)}_nL_2)|_{z\rightarrow z^{-1}}\right)\xi_j(z^{-1})
=-z\xi_k^{-1}(z)(L_1-\theta^{(k)}_nL_2)\xi_k(z), 
\end{align}
where $j\in\{1,2\}$, $k\in\{3,4\}$, the new gauge factors and new eigenvalues are 
\[
\xi_3(z)=(-z)^{-1-\alpha}, \quad
\theta^{(3)}_n=-n-1-\alpha-\beta,
\]
\[
\xi_4(z)=(1-z)^{-\alpha-\beta}(-z)^{-1+\beta},  \quad
\theta^{(4)}_n=-n-1. 
\]
Therefore, we have obtained two classes of quasi-Laurent-polynomial eigenfunctions of the GEVP (\ref{GEVP_HR}), 
where the Laurent-polynomial parts are $p^{(j)}_n(z^{-1}), j=1,2$. 

To summarize this part, we write down the four classes of quasi-Laurent-polynomial eigenfunctions 
of the GEVP 
\[
(L_1-\theta^{(j)}_nL_2)\phi^{(j,n)}(z)=0, \quad \phi^{(j,n)}(z)=\xi_j(z)p^{(j)}_n(z), \quad
j=1,2,3,4, 
\]
together as follow: 
\begin{align*}
&p^{(1)}_n(z)=P_n(z),&
&\xi_1(z)=1,&  &\theta^{(1)}_n=n; & \\
&p^{(2)}_n(z)=P_n(z; -\beta,-\alpha),& 
&\xi_2(z)=(1-z)^{-\alpha-\beta},& &\theta^{(2)}_n=n-\alpha-\beta; & \\
&p^{(3)}_n(z)=P_n(z^{-1}),& \quad 
&\xi_3(z)=(-z)^{-1-\alpha},& \quad &\theta^{(3)}_n=-n-1-\alpha-\beta; &\\
&p^{(4)}_n(z)=P_n(z^{-1}; -\beta,-\alpha),& \quad 
&\xi_4(z)=(-z)^{-1+\beta}(1-z)^{-\alpha-\beta},& \quad 
&\theta^{(4)}_n=-n-1. &
\end{align*}

\begin{remark}
Note that by deriving (\ref{rela_L_Linv}) we have the relation between the gauge factors 
\[
|\xi_4(z)\xi_1(z^{-1})|=|\xi_3(z)\xi_2(z^{-1})|, 
\]
and the eigenvalues are
\[
\{\theta^{(1)}_n, n=0,1,2,\ldots\}\cup\{ \theta^{(4)}_n, n=0,1,2,\ldots\}=\mathbb{Z}, 
\]
\[
\{\theta^{(2)}_n+\alpha+\beta, n=0,1,2,\ldots\}\cup\{ \theta^{(3)}_n+\alpha+\beta, n=0,1,2,\ldots\}=\mathbb{Z}. 
\]
In view of these relations, we may call $\phi^{(4,n)}(z)$ the dual of $\phi^{(1,n)}(z)$, 
and call $\phi^{(3,n)}(z)$ the dual of $\phi^{(2,n)}(z)$. 
\end{remark}

\subsection{Quasi-Laurent-polynomial eigenfunctions of the adjoint problem}

In view of the biorthogonality of HR polynomials on the unit circle, we read that our argument is $z=e^{ix}$, 
and its Hermitian conjugate is $\overline{z}=e^{-ix}=z^{-1}$. 
Let us define an inner product on the unit circle as follows, 
\begin{align}
\label{inner_0}
\langle f(z), g(z) \rangle
:=\dfrac{1}{2\pi}\int^{2\pi}_{0}f(e^{ix})\overline{g(e^{ix})}dx
=\dfrac{1}{2\pi}\int^{2\pi}_{0}f(e^{ix})g(e^{-ix})dx.
\end{align}
where $f, g\in C^{2}$ are functions with real parameters and satisfy 
\[
\dfrac{1}{2\pi}\int^{2\pi}_{0}|f(e^{ix})|^2dx<\infty, \quad
\dfrac{1}{2\pi}\int^{2\pi}_{0}|g(e^{ix})|^2dx<\infty.
\]
\begin{lemma}
\label{adjoint_ness}
For arbitrary functions 
$A(z)\in C^{2}$, $B(z)\in C^{1}$ and $C(z)$ on the unit circle with real parameters, 
the adjoint of the operator $C(z)I$ is 
\begin{align}
\label{ha_C0}
&
\overline{(C(z)I)^{\ast}}=C(z^{-1})I, 
\end{align}
the formal adjoints of the operators $B(z)\partial_z$ and $A(z)\partial_z^2$ are  
\begin{align}
\label{ha_C}
&
\overline{(B(z)\partial_z)^{\ast}}
=z\partial_z B(z^{-1})z I, \hspace{2mm}
\overline{(A(z)\partial_z^2)^{\ast}}
=z^2\partial_z^2 A(z^{-1})z^2 I. 
\end{align}
\end{lemma}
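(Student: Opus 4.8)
The plan is to compute the formal adjoints directly from the defining integral pairing \eqref{inner_0} by integration by parts on the unit circle, keeping careful track of the complex conjugation that turns $z=e^{ix}$ into $z^{-1}=e^{-ix}$. For the zeroth-order operator $C(z)I$, I would write $\langle C(z)f(z),g(z)\rangle=\frac{1}{2\pi}\int_0^{2\pi}C(e^{ix})f(e^{ix})g(e^{-ix})\,dx$ and simply move $C$ onto the second slot: since $g(e^{-ix})$ evaluated against the conjugate becomes $g$ at $z$ in the pairing convention, the operator acting on $g$ must be evaluated at $z^{-1}$, giving $\overline{(C(z)I)^{\ast}}=C(z^{-1})I$. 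This is the warm-up that fixes the bookkeeping convention.

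For $B(z)\partial_z$, I would start from $\langle B(z)f'(z),g(z)\rangle=\frac{1}{2\pi}\int_0^{2\pi}B(e^{ix})\,\frac{d}{dx}\!\big(f(e^{ix})\big)\,(ie^{ix})^{-1}\,g(e^{-ix})\,dx$, using $\partial_z=\frac{1}{ie^{ix}}\frac{d}{dx}$ on the circle. Integrating by parts in $x$ (the boundary terms vanish by $2\pi$-periodicity) throws the $\frac{d}{dx}$ onto $B(e^{ix})(ie^{ix})^{-1}g(e^{-ix})$; then I re-express the $x$-derivative back in terms of $\partial_z$ and $\partial_{z^{-1}}$, and use $\partial_{z^{-1}}=-z^2\partial_z$ (already recorded in the text) to consolidate. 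Substituting $z\mapsto z^{-1}$ in the factor that multiplies $g$ — which is what the pairing convention demands — collects the operator acting on $g$ into the stated form $z\,\partial_z\,B(z^{-1})\,z\,I$, where the outer $z$ and inner $z$ come precisely from the two factors of $(ie^{ix})^{-1}$ that appear, respectively, from rewriting $\frac{d}{dx}$ as $\partial_z$ after the substitution and from the original $\partial_z=\frac{1}{ie^{ix}}\frac{d}{dx}$. The second-order case $A(z)\partial_z^2$ is handled the same way with two integrations by parts; the bookkeeping of the chain-rule factors then produces $z^2\,\partial_z^2\,A(z^{-1})\,z^2\,I$, with the $z^2$'s on each side tracking the $(ie^{ix})^{-2}$ contributions after the change of variable.

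The main obstacle I anticipate is purely notational: getting the placement of the $z$ and $z^{-1}$ factors right between the differential operators, since the substitution $z\mapsto z^{-1}$ does not commute with $\partial_z$, and the identity operator notation $I$ in the statement is used to signal \emph{which} function each factor multiplies. Concretely, I must be disciplined that $z\,\partial_z\,B(z^{-1})\,z\,I$ means ``multiply $g$ by $z$, substitute $B$ at $z^{-1}$, differentiate, multiply by $z$ again,'' read right to left, and verify this matches the integrated-by-parts expression term by term (the $B'$, the $B/z$, and so on) after translating all $\frac{d}{dx}$'s into $\partial_z$'s. Once the convention is nailed down on $C(z)I$ and one checks a monomial test case such as $B(z)=z^k$, $f=z^m$, $g=z^n$, the general identities follow by linearity and density, so I would close the proof by remarking that it suffices to verify the formulas on Fourier monomials, where both sides reduce to elementary computations.
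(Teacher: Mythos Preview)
Your approach is essentially the same as the paper's: convert $\partial_z$ to $\partial_x$ via $\partial_z=-ie^{-ix}\partial_x$, integrate by parts in $x$, and translate back. The paper carries this out by directly computing $\langle f, Lg\rangle - \langle \overline{L^{\ast}}f, g\rangle$ for each of the three operators and identifying the result as a boundary term.

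One point to flag. You write that ``the boundary terms vanish by $2\pi$-periodicity,'' but the paper deliberately does \emph{not} discard them: it displays the boundary contributions
\[
\dfrac{1}{2\pi}\Big[B(e^{-ix})ie^{ix}f(e^{ix})\overline{g(e^{ix})}\Big]\Big|_{0}^{2\pi}
\]
and the analogous second-order expression, and this is precisely why the lemma says \emph{formal} adjoint for $B(z)\partial_z$ and $A(z)\partial_z^2$ but plain \emph{adjoint} for $C(z)I$. In this paper's setting the relevant functions involve factors such as $(-z)^{-\beta}$ and $(1-z)^{\alpha+\beta}$ with non-integer exponents, which are not single-valued on the circle, so $f(e^{2\pi i})=f(1)$ can genuinely fail. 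The paper returns to exactly this issue in the remark following the biorthogonality theorem, where it checks by hand that the specific boundary term arising from $\cF$ and its formal adjoint vanishes for the particular functions in play. So your integration-by-parts computation is right, but replace ``boundary terms vanish'' with ``the difference is a boundary term,'' and let that boundary term stand; that is what ``formal adjoint'' means here. Your closing monomial-check is a fine sanity test but is not needed once the integration by parts is written out.
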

\begin{proof}
Let $z=e^{ix}$, then we have the following formulas
\begin{align*}
&\hspace{8mm}\partial_z=-ie^{-ix}\partial_x, \hspace{-12mm}
&&\partial_z^2=e^{-2ix}(-\partial_x^2+i\partial_x),& \\
&\hspace{8mm}\partial_x=iz\partial_z,  \hspace{-12mm}
&&\partial_x^2=-z^2\partial_z^2-z\partial_z.&
\end{align*}
For any $f(z),g(z)\in C^2$, it follows from integration by part that 
\begin{align*}
&
\langle f(z), C(z)g(z) \rangle - \langle C(z^{-1})f(z), g(z) \rangle=0, \\
&
\langle f(z), B(z)\partial_zg(z) \rangle - \langle z\partial_z B(z^{-1})zf(z), g(z) \rangle
=\dfrac{1}{2\pi}\bigg[B(e^{-ix})ie^{ix}f(e^{ix}))\overline{g(e^{ix})}
\bigg]\bigg|_{0}^{2\pi}, \\
&
\langle f(z), A(z)\partial_z^2g(z) \rangle-\langle z^2\partial_z^2(A(z^{-1})z^2f(z)), g(z) \rangle \\
&\quad
=\dfrac{1}{2\pi}\bigg[\partial_x(A(e^{-ix})e^{2ix}f(e^{ix}))\overline{g(e^{ix})}
-A(e^{-ix})e^{2ix}f(e^{ix})(\partial_x\overline{g(e^{ix})}+i\overline{g(e^{ix})})
\bigg]\bigg|_{0}^{2\pi}.   
\end{align*}
\end{proof}

Consider the adjoint problem of (\ref{quasi})
\begin{equation}
\label{haGEVP_HR}
(\overline{L_1^{\ast}}-\mu_n\overline{L^{\ast}_2})\phi^{\ast}(z)=0, 
\end{equation}
where 
\begin{align}
\label{ophaL1}
 &\overline{L^{\ast}_1}
 =z^2(z-1)\partial_z^2-z(2+\alpha+(-3+\beta)z)\partial_z+(1-\beta)z I, \\
\label{ophaL2}
 &\overline{L^{\ast}_2}
 =z(z-1)\partial_z+(-1-\alpha+z) I. 
\end{align}

In what follows, we present the quasi-polynomial eigenfunctions for this adjoint problem. 
\begin{prop}
The following two classes of quasi-polynomial eigenfunctions and eigenvalues are solutions to 
the adjoint GEVP (\ref{haGEVP_HR}):
\begin{align}
\label{P_ast}
&(\overline{L_1^{\ast}}-\mu_n\overline{L^{\ast}_2})\phi^{(j,n)\ast}(z)=0, \quad 
\phi^{(j,n)\ast}(z)
=w_j(z)\tilde{p}^{(j)}_n(z), \quad j=1,2, 
\end{align}
where 
\begin{align}
\label{w1}
&w_1(z)=(1-z)^{\alpha+\beta}(-z)^{-1-\alpha}, \quad 
\tilde{p}^{(1)}_n(z)=P_n(z; \beta-1, \alpha+1), \quad 
\mu^{(1)}_n=\theta_n=n, \\
\label{w2}
&w_2(z)=(-z)^{-1-\alpha}, \quad 
\tilde{p}^{(2)}_n(z)=P_n(z;-\alpha-1,-\beta+1), \quad 
\mu^{(2)}_n=n-(\alpha+\beta). 
\end{align}
\end{prop}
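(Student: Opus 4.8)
The plan is to verify directly that each candidate pair $(\mu_n^{(j)}, \phi^{(j,n)\ast}(z))$ satisfies the adjoint GEVP \eqref{haGEVP_HR}, exploiting the structure already established in Proposition~\ref{quasi_ef} and in Lemma~\ref{adjoint_ness}. The key observation is that $\overline{L_1^\ast}$ and $\overline{L_2^\ast}$ as written in \eqref{ophaL1}--\eqref{ophaL2} are, up to the explicit conjugation rules $\overline{(C(z)I)^\ast}=C(z^{-1})I$, $\overline{(B(z)\partial_z)^\ast}=z\partial_z B(z^{-1})z\,I$, and $\overline{(A(z)\partial_z^2)^\ast}=z^2\partial_z^2 A(z^{-1})z^2\,I$ of Lemma~\ref{adjoint_ness}, precisely the formal adjoints of the operators $L_1,L_2$ of the HR polynomials in \eqref{opL}. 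So first I would record the identity
\begin{align*}
\langle f, (\overline{L_i^\ast}-\mu\,\overline{L_j^\ast})^{\vphantom{\ast}} g\rangle
= \langle (L_i - \bar\mu L_j) f, g\rangle
\end{align*}
valid for $f,g\in C^2$ on the unit circle once the boundary terms in Lemma~\ref{adjoint_ness} vanish; since our parameters are real, $\bar\mu=\mu$.

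Next I would reduce the problem to known facts about HR polynomials by a gauge transformation. Writing $\phi^{(j,n)\ast}(z)=w_j(z)\tilde p_n^{(j)}(z)$, the task is to show $w_j^{-1}(z)(\overline{L_1^\ast}-\mu_n^{(j)}\overline{L_2^\ast})w_j(z)$ annihilates $\tilde p_n^{(j)}(z)$. The computation amounts to conjugating the second-order operator $\overline{L_1^\ast}-\mu\overline{L_2^\ast}$ by $w_j$ and matching the result — after clearing an overall nonzero factor — to a multiple of the HR differential operator \eqref{Deq_HR} with appropriate shifted parameters. Concretely, for $j=1$ with $w_1(z)=(1-z)^{\alpha+\beta}(-z)^{-1-\alpha}$ I expect $w_1^{-1}(\overline{L_1^\ast}-n\overline{L_2^\ast})w_1$ to reduce to (a constant times) the operator $z(1-z)\partial_z^2+(1-(\alpha+1)-n-(2+(\beta-1)-n)z)\partial_z + n((\beta-1)+1)I$ acting on $P_n(z;\beta-1,\alpha+1)$, which vanishes by \eqref{Deq_HR} with $(\alpha,\beta)\mapsto(\beta-1,\alpha+1)$. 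The choice $w_2(z)=(-z)^{-1-\alpha}$ and $\mu_n^{(2)}=n-(\alpha+\beta)$ should likewise conjugate the pencil onto the HR operator with $(\alpha,\beta)\mapsto(-\alpha-1,-\beta+1)$ acting on $P_n(z;-\alpha-1,-\beta+1)$. The two weight functions $w_1,w_2$ and the two eigenvalue shifts are exactly the adjoint counterparts of the gauge factors $\xi_j$ and eigenvalue shifts appearing in Proposition~\ref{quasi_ef} and the subsequent summary, and in fact one can anticipate $w_1,w_2$ from the relations $\overline{L_i^\ast}=(\cF^\ast\cG_i^\ast+I)\widetilde\phi_i\,I$ of \eqref{L1_ast}--\eqref{L2_ast} together with the Pearson equation \eqref{w_AB_r}: the natural weight attaching to the adjoint of $L_1$ is $w(z)$ with the parameter shift induced by \eqref{opL1_P}--\eqref{opL2_P}.

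After the gauge reduction, the remaining work is the routine but unavoidable verification that the conjugated second-order operators have exactly the coefficients claimed. I would carry this out by expanding $w_j^{-1}\partial_z^2 w_j = \partial_z^2 + 2(w_j'/w_j)\partial_z + (w_j''/w_j)$ and similarly for the first-order term, substituting $w_1'/w_1 = (1+\alpha)/z - (\alpha+\beta)/(1-z)$ (and the analogous logarithmic derivative for $w_2$), and simplifying over the common denominator $z(1-z)$; the $C(z)I$ part of the pencil must cancel the leftover non-derivative terms down to the constant $n(\beta-1+1)=n\beta$ (resp. the constant associated with $n-(\alpha+\beta)$), which is the one nontrivial cancellation to monitor. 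I expect this bookkeeping — keeping the $\alpha,\beta$-dependence straight through the parameter shifts and confirming that the $z$-dependent remainder of the zeroth-order coefficient vanishes identically rather than merely at the zeros of $\tilde p_n^{(j)}$ — to be the main obstacle, though it is entirely mechanical. Finally I would note that $P_n(z;\beta-1,\alpha+1)$ and $P_n(z;-\alpha-1,-\beta+1)$ are genuine polynomials of degree $n$, so $\phi^{(j,n)\ast}$ has the stated quasi-polynomial form, completing the proof.
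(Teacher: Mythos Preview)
Your approach is correct and amounts to the same core computation as the paper's, though organized differently. The paper simply says the proof is identical to that of Proposition~\ref{quasi_ef}: one posits a quasi-polynomial ansatz $\phi^\ast=w(z)p(z)$, demands that $w^{-1}\overline{L_1^\ast}w$ and $w^{-1}\overline{L_2^\ast}w$ take the form $\eta(z)$ times a polynomial-coefficient operator so that $p$ can be polynomial, solves the resulting ODE for $w$, and recognizes the residual equation as an HR equation with shifted parameters. You instead take the stated $w_j$ as given and verify by direct conjugation that $w_j^{-1}(\overline{L_1^\ast}-\mu_n^{(j)}\overline{L_2^\ast})w_j$ collapses (up to a nonzero scalar factor, here $-z$) to the HR operator \eqref{Deq_HR} with $(\alpha,\beta)\mapsto(\beta-1,\alpha+1)$ or $(-\alpha-1,-\beta+1)$. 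Both routes hinge on the same logarithmic-derivative bookkeeping; the paper's version has the advantage of \emph{deriving} the two gauge factors rather than guessing them, while yours is more economical given that the proposition only asserts sufficiency. Two small remarks: your opening inner-product identity is not actually used in the argument and can be dropped; and your formula $w_1'/w_1=(1+\alpha)/z-(\alpha+\beta)/(1-z)$ has a sign slip (it should be $-(1+\alpha)/z-(\alpha+\beta)/(1-z)$), so be careful when you carry out the mechanical step.
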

Here we omit the proof since it is identical to that of proposition \ref{quasi_ef}. 

Again, we can obtain the quasi-Laurent-polynomials of the adjoint GEVP by substituting the variable $z^{-1}$ for $z$. 
It turns out that
\begin{align}
\label{rela_hL_hLinv}
-zw_j^{-1}(z^{-1})\left((\overline{L^{\ast}_1}-\mu^{(j)}_n\overline{L^{\ast}_2})|_{z\rightarrow z^{-1}}\right)w_j(z^{-1})
=w_k^{-1}(z)(\overline{L^{\ast}_1}-\mu^{(k)}_n\overline{L^{\ast}_2})w_k(z), 
\end{align}
where $j\in\{1,2\}$, $k\in\{3,4\}$, the new gauge factors and new eigenvalues are 
\[
w_3(z)=(1-z)^{\alpha+\beta}(-z)^{-1-\alpha-\beta}, \quad
\mu^{(3)}_n=-n-1-\alpha-\beta,
\]
\[
w_4(z)=z^{-1}, \quad
\mu^{(4)}_n=-n-1,
\]
and the Laurent-polynomial parts are $\tilde{p}^{(j)}_n(z^{-1}), j=1,2$. 

We write down the four classes of quasi-Laurent-polynomial eigenfunctions of the adjoint GEVP 
for reference 
\[
(\overline{L_1^{\ast}}-\mu_n\overline{L^{\ast}_2})\phi^{(j,n)\ast}(z)=0, \quad 
\phi^{(j,n)\ast}(z)
=w_j(z)\tilde{p}^{(j)}(z), \quad j=1,2, 3,4,
\]
where
\begin{align*}
&\tilde{p}^{(1)}(z)=P_n(z; \beta-1, \alpha+1),& \quad 
&w_1(z)=(1-z)^{\alpha+\beta}(-z)^{-1-\alpha},& \quad
&\mu^{(1)}_n=n; & \\
&\tilde{p}^{(2)}(z)=P_n(z;-\alpha-1,-\beta+1),&  \quad 
&w_2(z)=(-z)^{-1-\alpha},& \quad 
&\mu^{(2)}_n=n-\alpha-\beta; & \\
&\tilde{p}^{(3)}(z)=P_n(z^{-1}; \beta-1, \alpha+1),& \quad 
&w_3(z)=(1-z)^{\alpha+\beta}(-z)^{-1-\alpha-\beta}, &\\
&\hspace{4.7mm}\mu^{(3)}_n=-n-1-\alpha-\beta; \nonumber \\
&\tilde{p}^{(4)}(z)=P_n(z^{-1};-\alpha-1,-\beta+1),& \quad 
&w_4(z)=z^{-1},& \quad 
&\mu^{(4)}_n=-n-1. &
\end{align*}

\begin{remark}
One can verify that 
\[
\dfrac{1}{2\pi}\int^{2\pi}_{0}|\phi^{(j,n)}(e^{ix})|^2dx<\infty, \quad
\dfrac{1}{2\pi}\int^{2\pi}_{0}|\phi^{(j,n)\ast}(e^{ix})|^2dx<\infty, \quad j=1,2,3,4, 
\]
under certain conditions. 
In fact, for $j=1,2,3,4$, it holds that
\[
|w_j(z)w_j(z^{-1})|=
|\xi_{5-j}(z; -\alpha-1,-\beta+1)\xi_{5-j}(z^{-1}; -\alpha-1,-\beta+1)|, 
\]
which leads to 
\[
|\phi^{(j,n)\ast}(z)\phi^{(j,n)\ast}(z^{-1})|
=|\phi^{(5-j,n)}(z; -\alpha-1,-\beta+1)\phi^{(5-j,n)}(z^{-1}; -\alpha-1,-\beta+1)|. 
\]
Thus, we have 
\begin{align}
\label{trans_con}
&
\dfrac{1}{2\pi}\int^{2\pi}_{0}|\phi^{(j,n)\ast}(e^{ix})|^2dx
\propto\dfrac{1}{2\pi}\int^{2\pi}_{0}|\phi^{(5-j,n)}(e^{ix}; -\alpha-1,-\beta+1)|^2dx 
\end{align}
So it suffices to show $(2\pi)^{-1}\int^{2\pi}_{0}|\phi^{(j,n)}(e^{ix})|^2dx<\infty$. 
More precisely, we only need to discuss that for $j=1$ and $j=2$, since 
\[
|\phi^{(j,n)}(e^{ix})\phi^{(j,n)}(e^{-ix})|=|\phi^{(j+2,n)}(e^{ix})\phi^{(j+2,n)}(e^{-ix})|, \quad j=1,2. 
\]
For $j=1$, we have 
\begin{align*}
\dfrac{1}{2\pi}\int^{2\pi}_{0}\phi^{(1,n)}(e^{ix})\phi^{(1,n)}(e^{-ix})dx
&
=\left(\dfrac{(\beta)_n}{(\alpha+1)_n}\right)^2
\sum^{n}_{k=0}\dfrac{(-n,\alpha+1)_k}{k!(1-\beta-n)_k}\sum^{n}_{l=0}\dfrac{(-n,\alpha+1)_l}{l!(1-\beta-n)_l} \\
&\hspace{6mm}
\cdot
\dfrac{1}{2\pi}\int^{2\pi}_{0}e^{i(k-l)x}dx \\
&
=\left(\dfrac{(\beta)_n}{(\alpha+1)_n}\right)^2
\sum^{n}_{k=0}\left(\dfrac{(-n,\alpha+1)_k}{k!(1-\beta-n)_k}\right)^2<\infty. 
\end{align*}
For $j=2$, we have 
\begin{align*}
\dfrac{1}{2\pi}\int^{2\pi}_{0}\phi^{(2,n)}(e^{ix})\phi^{(2,n)}(e^{-ix})dx
&
=\left(\dfrac{(-\alpha)_n}{(1-\beta)_n}\right)^2
\sum^{n}_{k=0}\dfrac{(-n,-\beta+1)_k}{k!(1+\alpha-n)_k}\sum^{n}_{l=0}\dfrac{(-n,-\beta+1)_l}{l!(1+\alpha-n)_l} \\
&\hspace{6mm}
\cdot
\dfrac{1}{2\pi}\int^{2\pi}_{0}e^{i(k-l)x}(1-e^{-ix})^{-\alpha-\beta}(1-e^{ix})^{-\alpha-\beta}dx, 
\end{align*}
and 
\[
\dfrac{1}{2\pi}\int^{2\pi}_{0}e^{i(k-l)x}(1-e^{-ix})^{-\alpha-\beta}(1-e^{ix})^{-\alpha-\beta}dx
=(-1)^{-\alpha-\beta}\pi\dfrac{\Gamma(k-l+\alpha+\beta)(1+i\tan((\alpha+\beta)\pi))}
{\Gamma(k-l+1-\alpha-\beta)\Gamma(2(\alpha+\beta))}
\]
under the condition $\Re(\alpha+\beta)<1/2$. 
At the same time, the relation (\ref{trans_con}) requires that $\Re(-\alpha-\beta)<1/2$. 
These together with 
the condition (\ref{cond_conv_w})
yields the stronger condition 
$\alpha>-1$, $\beta>-1$ and 
$-1/2<\alpha+\beta<1/2$. 

\end{remark}

\begin{remark}
\label{eigen_ast}
Notice that the eigenvalues of $\phi^{(j,n)\ast}(z)$ coincide with those of $\phi^{(j,n)}(z)$, i.e., 
$\mu^{(j)}_n=\theta^{(j)}_n$, $j=1,2,3,4$. 
\end{remark}

\begin{remark}
In view of the relations between the gauge factors
\[
|w_4(z)w_1(z^{-1})|=
|w_3(z)w_2(z^{-1})|
\]
and those between the eigenvalues 
\[
\{\mu^{(1)}_n, n=0,1,2,\ldots\}\cup\{ \mu^{(4)}_n, n=0,1,2,\ldots\}=\mathbb{N}, 
\]
\[
\{\mu^{(2)}_n+\alpha+\beta, n=0,1,2,\ldots\}\cup\{ \mu^{(3)}_n+\alpha+\beta, n=0,1,2,\ldots\}=\mathbb{N}, 
\]
we may call $\phi^{(4,n)\ast}(z)$ the dual of $\phi^{(1,n)\ast}(z)$, 
and call $\phi^{(3,n)\ast}(z)$ the dual of $\phi^{(2,n)\ast}(z)$. 
\end{remark}

\begin{remark}
We also notice the following relationship between $w_1(z)$ and the weight function (\ref{w_HR}) of HR polynomials 
\begin{align}
\label{w_1_w}
\overline{w_1(z)}=w_1(z^{-1})=-zw(z), \text{ or }
-zw_1(z)=w(z^{-1})=\overline{w(z)}. 
\end{align}
Thus, it follows from the relation (\ref{opL2_P}) that 
\[
\overline{L^{\ast}_2}\phi^{(1,n)\ast}(z)
=\overline{w(z)}(L_2P_{n}(z))|_{\alpha\rightarrow\beta-1,\beta\rightarrow\alpha+1}
=-(n+\beta)\overline{w(z)}P_n(z; \beta, \alpha), 
\]
where
\[
\overline{L^{\ast}_1}w_1(z)
=\overline{w(z)}(L_1|_{\alpha\rightarrow\beta-1,\beta\rightarrow\alpha+1}), \quad
\overline{L^{\ast}_2}w_1(z)
=\overline{w(z)}(L_2|_{\alpha\rightarrow\beta-1,\beta\rightarrow\alpha+1}). 
\]
Due to the biorthogonality (\ref{biorth_0}), if $\psi(z)$ coincides with $\phi^{(1,m)}(z)$ (=$P_m(z)$), then
\begin{align}
\label{PLP}
\langle \phi^{(1,m)}(z), \overline{L^{\ast}_2}\phi^{(1,n)\ast}(z) \rangle
=-(n+\beta)\langle P_n(z), \overline{w(z)}P_n(z; \beta, \alpha) \rangle
=-(n+\beta)h_n\delta_{mn}. 
\end{align}
In this way, we have verified the fact that the biorthogonal partner of $P_n(z)$ is $Q_n(z)=P_n(z; \beta, \alpha)$, 
where $h_n$ is defined by (\ref{w_HR}). 
\end{remark}

\begin{remark}
The relationship between $w_1(z)$ and the coefficients $A_j(z)$, $B_j(z)$, $C_j(z)$ 
of the operators $L_1$ and $L_2$ can be expressed by the following formula: 
\begin{align}
\label{w_1_f1}
&\dfrac{w'_1(z)}{w_1(z)}=-\dfrac{1}{z}-\dfrac{B_1(z^{-1})-A'_1(z^{-1})}{z^2A_1(z^{-1})}. 
\end{align}
\end{remark}

\section{Exceptional Hendriksen-van Rossum polynomials}
In this section, we will apply spectral transformations to HR polynomials 
and construct the corresponding exceptional extensions. 
First, we shall give a slightly modified version of the Darboux transformation of GEVP compared with 
what was introduced in section 2. 
Since our variable on the unit circle is $z=e^{ix}$, we will change the operators involved in the adjoint problem into 
Hermitian conjugations with respect to a complex variable like we did in subsection 3.2. 
It is not difficult to verify that the equations (\ref{L1_ast})-(\ref{XL1_ast}) still hold after replacing 
$L^{\ast}_1$, $L^{\ast}_2$, $\mathcal{F}^{\ast}$, $\mathcal{G}^{\ast}_1$, $\mathcal{G}^{\ast}_2$
by $\overline{L^{\ast}_1}$, $\overline{L^{\ast}_2}$, 
$\overline{\mathcal{F}^{\ast}}$, $\overline{\mathcal{G}^{\ast}_1}$, $\overline{\mathcal{G}^{\ast}_1}$, 
where
\begin{align}
\label{XL1}
&\overline{L^{\ast}_1}
=z^2\partial_z^2(A_1(z^{-1})z^2 I)+z\partial_z B_1(z^{-1})z I+C_1(z^{-1})I, \\
\label{XL2}
&\overline{L^{\ast}_2}
=z^2\partial_z^2(A_2(z^{-1})z^2 I)+z\partial_z B_2(z^{-1})z I+C_2(z^{-1})I, \\
\label{XF}
&\overline{\mathcal{F}^{\ast}}
=z\partial_z\dfrac{z}{\epsilon(z^{-1})} I-\dfrac{\phi'(z^{-1})}{\epsilon(z^{-1})\phi(z^{-1})} I, \\
\label{XG1}
&\overline{\mathcal{G}^{\ast}_1}
=z\partial_z\dfrac{zA_1(z^{-1})\epsilon(z^{-1})}{\widetilde{\phi}_1(z^{-1})} I
+\epsilon(z^{-1})\dfrac{B_1(z^{-1})+A_1(z^{-1})\dfrac{\epsilon'(z^{-1})}{\epsilon(z^{-1})}
+A_1(z^{-1})\dfrac{\phi'(z^{-1})}{\phi(z^{-1})}}{\widetilde{\phi}_1(z^{-1})} I, \\
\label{XG2}
&\overline{\mathcal{G}^{\ast}_2}
=z\partial_z\dfrac{zA_2(z^{-1})\epsilon(z^{-1})}{\widetilde{\phi}_2(z^{-1})} I
+\epsilon(z^{-1})\dfrac{B_2(z^{-1})+A_2(z^{-1})\dfrac{\epsilon'(z^{-1})}{\epsilon(z^{-1})}
+A_2(z^{-1})\dfrac{\phi'(z^{-1})}{\phi(z^{-1})}}{\widetilde{\phi}_2(z^{-1})} I. 
\end{align}
Specifically, for HR polynomials, we can write the above operators explicitly with
\[
A_1(z)=z(1-z), \quad
B_1(z)=1-\beta-(2+\alpha)z, \quad
C_1(z)=0, 
\]
\[
A_2(z)=0, \quad
B_2(z)=1-z, \quad
C_2(z)=-(1+\alpha). 
\]

Recall that in section 2 we have already shown that the Darboux transformed functions 
$\widehat{\psi}(z)$ and $\widehat{\cL}_2^*\widehat{\psi^{\ast}}(z)$ satisfy the biorthogonality (\ref{Xbiorth}). 
If we choose the eigenfunction $\psi(z)$ as an HR polynomial $P_n(z)$, 
and choose the seed function $\phi(z)$ as a quasi-Laurent-polynomial eigenfunction, i.e., 
\[
\psi(z)=\phi^{(1,n)}(z)=P_{n}(z), \quad 
\lambda=\theta_n, 
\]
\[
\phi(z)=\phi^{(j_0,l_0)}(z)=\xi_{j_0}(z)p^{(j_0)}_{l_0}(z), \quad 
\kappa=\theta^{(j_0)}_{l_0}, 
\]
where $j_0\in\{1,2,3,4\}$ and $l_0\in\mathbb{N}_{\geq 1}, n\in\mathbb{N}_{\geq 0}$, 
then we can construct the Darboux transformed HR polynomials by using the results in section 2. 

First, let us rewrite the Darboux transformed eigenfunction as follows: 
\begin{align}
&\hat{\psi}(z)=\cF\psi(z)=\cF P_n(z)
=\dfrac{1}{\epsilon(z)\phi^{(j_0,l_0)}(z)}
\begin{vmatrix}
\phi^{(j_0,l_0)}(z) & P_n(z) \\
(\phi^{(j_0,l_0)}(z))' & P'_n(z)
\end{vmatrix} \nonumber
\\ &
=\dfrac{1}{\epsilon(z)Q^{(j_0)}(z)p^{(j_0)}_{l_0}(z)}\bigg[Q^{(j_0)}(z)
\left(p^{(j_0)}_{l_0}(z)P'_{n}(z)-(p^{(j_0)}_{l_0}(z))'P_{n}(z)\right)
-P^{(j_0)}(z)p^{(j_0)}_{l_0}(z)P_{n}(z)
\bigg], \nonumber
\end{align}
where the polynomials $Q^{(j_0)}(z)$ and $P^{(j_0)}(z)$ are determined from the numerator and denominator polynomials of $\xi'_{j_0}(z)/\xi_{j_0}(z)$, respectively, as 
\begin{align}
\label{PQ}
\dfrac{\xi'_{j_0}(z)}{\xi_{j_0}(z)}
=\dfrac{P^{(j_0)}(z)}{Q^{(j_0)}(z)}. 
\end{align}
By choosing the decoupling factor $\epsilon(z)$ as 
\begin{align}
\label{eps}
\epsilon^{(j_0,l_0)}(z)=\dfrac{1}{Q^{(j_0)}(z)p^{(j_0)}_{l_0}(z)}, 
\end{align}
the Darboux transformed eigenfunction $\hat{\psi}(z)$ can be normalized to
\begin{align}
\label{hpsi_0}
\hat{\psi}^{(j_0,l_0,n)}(z)
=Q^{(j_0)}(z)
\left(p^{(j_0)}_{l_0}(z)P'_{n}(z)-(p^{(j_0)}_{l_0}(z))'P_{n}(z)\right)-P^{(j_0)}(z)p^{(j_0)}_{l_0}(z)P_{n}(z). 
\end{align}
The right-hand side is a polynomial if $j_0=1,2$ while it is a Laurent polynomial otherwise. 
Specifically, let us write down these four classes of eigenfunctions: 
\begin{align}
&
\hat{\psi}^{(1,l_0,n)}(z)
=P_{l_0}(z)P'_{n}(z)-P'_{l_0}(z)P_{n}(z), \\
&
\hat{\psi}^{(2,l_0,n)}(z)
=(1-z)\left(P_{l_0}(z; -\beta,-\alpha)P'_{n}(z)-P'_{l_0}(z; -\beta,-\alpha)P_{n}(z)\right) \\
&\hspace{24mm}
-(\alpha+\beta)P_{l_0}(z; -\beta,-\alpha)P_{n}(z), \nonumber \\
&
\hat{\psi}^{(3,l_0,n)}(z)
=z\left(P_{l_0}(z^{-1})P'_{n}(z)-(P_{l_0}(z^{-1}))'P_{n}(z)\right) \\
&\hspace{24mm}
+(1+\alpha)P_{l_0}(z^{-1})P_{n}(z),  \nonumber \\
&
\hat{\psi}^{(4,l_0,n)}(z)
=z(z-1)\left(P_{l_0}(z^{-1}; -\beta,-\alpha)P'_{n}(z)-(P_{l_0}(z^{-1}; -\beta,-\alpha))'P_{n}(z)\right) \\
&\hspace{24mm}
-(1-\beta-(1+\alpha)z)P_{l_0}(z^{-1}; -\beta,-\alpha)P_{n}(z).  \nonumber
\end{align}
Both of $\hat{\psi}^{(1,l_0,n)}(z)$ and $\hat{\psi}^{(2,l_0,n)}(z)$ are polynomials, 
as well as $z^{l_0}\hat{\psi}^{(3,l_0,n)}(z)$ and $z^{l_0}\hat{\psi}^{(4,l_0,n)}(z)$. 
Denote them by $P^{(j_0,l_0,n)}(z)$, $j_0=1,2,3,4$, 
and the degrees of them are $n+l_0-1$, $n+l_0$, $n+l_0$ and $n+l_0+1$, respectively. 

\begin{remark}
Since the above eigenfunctions are obtained from $\cF\phi^{(1,n)}(z)$, 
the other eigenfunctions of the Darboux transformed GEVP are expected to be obtained from 
$\cF\phi^{(j,n)}(z)$ for $j=2,3,4$. 
However, $\cF\phi^{(j,n)}(z)$ ($j=2,3,4$) turns out to be rational with the decoupling factor defined by (\ref{eps}). 
Therefore, in the cases of $\cF\phi^{(j,n)}(z)\neq 0$, 
the only (Laurent-)polynomial eigenfunctions of the Darboux transformed GEVP are given by $\cF\phi^{(1,n)}(z)$. 
The cases where $\cF\phi^{(j,n)}(z)=0$ corresponds to $(j_0,l_0)=(j,n)$, 
where $\hat{\psi}^{(1,l_0,l_0)}(z)=0$, and the so-called state-deletion occurs. 
Here the non-vanishing eigenfunction corresponding to the eigenvalue $\kappa=\theta_{l_0}^{(j_0)}$ can be 
chosen from the kernel of the backward operator $\kappa(\cG_1-\cG_2)$ 
(in view of the factorizations of the operators $\widehat{L}_1$ and $\widehat{L}_2$, see (\ref{factor_L0})). 
Denote them by $\hat{\psi}^{(j_0,l_0,j_0,l_0)}(z)$, and it is not difficult to solve the equation 
$\kappa(\cG_1-\cG_2)\hat{\psi}^{(j_0,l_0,j_0,l_0)}(z)=0$. 
Consequently, one obtains 

\begin{align}
&
\hat{\psi}^{(j_0,l_0,j_0,l_0)}(z)=
\dfrac{1}{\epsilon^{(j_0,l_0)}(z)\phi^{(j_0,l_0)}(z)}\exp{\left\{\int\dfrac{\kappa B_2(z)-B_1(z)}{A_1(z)}dz\right\}} \nonumber \\
&\hspace{19mm}
=
\label{v_state}
\begin{cases}
(1-z)^{-1-\alpha-\beta}z^{-1+\beta+l_0}, & j_0=1, \\
-z^{-1-\alpha+l_0}, & j_0=2, \\
(-1)^{1+\alpha}(1-z)^{-1-\alpha-\beta}z^{-l_0}, & j_0=3, \\
(-1)^{-\beta}z^{-l_0}, & j_0=4. 
\end{cases}
\end{align}
We can determine that $\hat{\psi}^{(4,l_0, 4,l_0)}(z)=z^{-l_0}$ (where the constant coefficient is free) 
belongs to the class of $\{\hat{\psi}^{(4,l_0,n)}(z)\}$, more precisely, it is the starting term of this class. 
In fact, if we consider the sequence $\{P^{(4,l_0,n)}(z)\}$ ($\{z^{l_0}\hat{\psi}^{(4,l_0,n)}(z)\}$), 
it stands for the constant term. 
In this case, the state-addition occurs. 
We denote $P^{(4,l_0,-l_0-1)}(z)=1$ to keep $\deg P^{(4,l_0,n)}(z)=n+l_0+1$. 
\end{remark}

\subsection{Biorthogonal partners}
Let us consider the biorthogonal partners. 
For $\psi^{\ast}(z)=\phi^{(1,n)\ast}(z)$, 
it follows from (\ref{backward_hchi}) and the relation (\ref{w_1_f1}) that 

\begin{align*}
&
\widehat{L}_2^{\ast}\widehat{\psi^{\ast}}(z)
=\widehat{L}_2^{\ast}(\theta^{(j_0)}_{l_0}\overline{\cG^{\ast}_1}-\mu^{(1)}_n\overline{\cG^{\ast}_2})
\tilde{\phi}_2(z^{-1})\phi^{(1,n)\ast}(z)
=\theta^{(j_0)}_{l_0}(\overline{\cG^{\ast}_1}-\overline{\cG^{\ast}_2})\tilde{\phi}_2(z^{-1})\phi^{(1,n)\ast}(z) \\
&
=\epsilon(z^{-1})A_1(z^{-1})z^2\left(
\left(\phi^{(1,n)\ast}(z)\right)'
-\left(\dfrac{w'_1(z)}{w_1(z)}+\dfrac{\theta^{(j_0)}_{l_0} B_2(z^{-1})}{z^2A_1(z^{-1})}
-\dfrac{(\phi^{(j_0,l_0)})'(z^{-1})}{z^2\phi^{(j_0,l_0)}(z^{-1})}\right)
\phi^{(1,n)\ast}(z)
\right) \\
&
=\dfrac{(z-1)w_1(z)}{Q^{(j_0)}(z^{-1})p^{(j_0)}_{l_0}(z^{-1})}
\left(
(\tilde{p}^{(1)}_n(z))'
-\left(\ln{\left(z^{\theta^{(j_0)}_{l_0}}\phi^{(j_0,l_0)}(z^{-1})\right)}\right)'
\tilde{p}^{(1)}_n(z)
\right). 
\end{align*}
The expression in the log symbol can be simplified using the formula (\ref{formula_HR_1}): 
\begin{align*}
&
z^{\theta^{(1)}_{l_0}}\phi^{(1,l_0)}(z^{-1})
=\dfrac{(\beta)_{l_0}}{(\alpha+1)_{l_0}}\tilde{p}^{(1)}_{l_0}(z),  \\
&
z^{\theta^{(2)}_{l_0}}\phi^{(2,l_0)}(z^{-1})
=\dfrac{(-\alpha)_{l_0}}{(-\beta+1)_{l_0}}(z-1)^{-\alpha-\beta}\tilde{p}^{(2)}_{l_0}(z), 
\\
&
z^{\theta^{(3)}_{l_0}}\phi^{(3,l_0)}(z^{-1})
=(-1)^{-1-\alpha}\dfrac{(\beta)_{l_0}}{(\alpha+1)_{l_0}}z^{-\beta}\tilde{p}^{(3)}_{l_0}(z), \\
&
z^{\theta^{(4)}_{l_0}}\phi^{(4,l_0)}(z^{-1})
=(-1)^{-1-\alpha}\dfrac{(-\alpha)_{l_0}}{(-\beta+1)_{l_0}}z^{\alpha}(1-z)^{-\alpha-\beta}\tilde{p}^{(4)}_{l_0}(z). 
\end{align*}
which leads to 
\begin{align*}
\left(\ln{\left(z^{\theta^{(j_0)}_{l_0}}\phi^{(j_0,l_0)}(z^{-1})\right)}\right)'
=
\begin{cases}
\dfrac{(\tilde{p}^{(j_0)}_{l_0}(z))'}{\tilde{p}^{(j_0)}_{l_0}(z)}+\dfrac{P^{(j_0)}(z)}{Q^{(j_0)}(z)}, & j_0=1,2, \\
\dfrac{(\tilde{p}^{(j_0)}_{l_0}(z))'}{\tilde{p}^{(j_0)}_{l_0}(z)}+\dfrac{P^{(j_0)}(z)}{Q^{(j_0)}(z)}
+\dfrac{1+\alpha-\beta}{z}, & j_0=3,4. 
\end{cases}
\end{align*}
Thus, we have 
\begin{equation}
\label{hL2psi}
\widehat{L}_2^{\ast}\widehat{\psi^{\ast}}(z)
=
\begin{cases}
\dfrac{(\beta)_{l_0}}{(1+\alpha)_{l_0}}
\dfrac{z^{-l_0}(z-1)w_1(z)}{Q^{(j_0)}(z^{-1})Q^{(j_0)}(z)(p^{(j_0)}_{l_0}(z^{-1}))^2}Q^{(j_0,l_0,n)}(z), & j_0=1, \\
\dfrac{(-\alpha)_{l_0}}{(-\beta+1)_{l_0}}
\dfrac{z^{-l_0}(z-1)w_1(z)}{Q^{(j_0)}(z^{-1})Q^{(j_0)}(z)(p^{(j_0)}_{l_0}(z^{-1}))^2}Q^{(j_0,l_0,n)}(z), & j_0=2, \\
\dfrac{(1+\alpha)_{l_0}}{(\beta)_{l_0}}
\dfrac{z^{-2l_0}(z-1)w_1(z)}{Q^{(j_0)}(z^{-1})Q^{(j_0)}(z)(\tilde{p}^{(j_0)}_{l_0}(z))^2}Q^{(j_0,l_0,n)}(z), & j_0=3, \\
\dfrac{(-\beta+1)_{l_0}}{(-\alpha)_{l_0}}
\dfrac{z^{-2l_0}(z-1)w_1(z)}{Q^{(j_0)}(z^{-1})Q^{(j_0)}(z)(\tilde{p}^{(j_0)}_{l_0}(z))^2}Q^{(j_0,l_0,n)}(z), & j_0=4. 
\end{cases}
\end{equation}
where the formula (\ref{formula_HR_1}) is used, and 
\begin{align*}
&
Q^{(j_0,l_0,n)}(z)=
Q^{(j_0)}(z)\left(\tilde{p}^{(j_0)}_{l_0}(z)\tilde{p}^{(1)}_n(z))'-(\tilde{p}^{(j_0)}_{l_0}(z))'\tilde{p}^{(1)}_n(z)\right) 
-P^{(j_0)}(z)\tilde{p}^{(j_0)}_{l_0}(z)\tilde{p}^{(1)}_n(z), \hspace{2mm} j_0=1,2, \\
&
Q^{(j_0,l_0,n)}(z)=
z^{l_0}Q^{(j_0)}(z)\left(\tilde{p}^{(j_0)}_{l_0}(z)\tilde{p}^{(1)}_n(z))'-(\tilde{p}^{(j_0)}_{l_0}(z))'\tilde{p}^{(1)}_n(z)\right) \\
&\hspace{22mm}
-z^{l_0}P^{(j_0)}(z)\tilde{p}^{(j_0)}_{l_0}(z)\tilde{p}^{(1)}_n(z)
-(1+\alpha-\beta)z^{l_0-1}Q^{(j_0)}(z)\tilde{p}^{(j_0)}_{l_0}(z)\tilde{p}^{(1)}_n(z), 
 \hspace{2mm} j_0=3,4. 
\end{align*}
Again, one can see that $Q^{(1,l_0,n)}(z)$, $Q^{(2,l_0,n)}(z)$, $Q^{(3,l_0,n)}(z)$ and $Q^{(4,l_0,n)}(z)$ 
are polynomials of degrees
$n+l_0-1$, $n+l_0$, $n+l_0$ and $n+l_0+1$, respectively. 

\begin{remark}
The polynomials $Q^{(j_0,l_0,n)}(z)$ are the biorthogonal partners of $P^{(j_0,l_0,n)}(z)$.
In view of the relation (\ref{rela_X1PQ}), we can choose the biorthogonal partner of $P^{(4,l_0,-l_0-1)}(z)=1$
as $Q^{(4,l_0,-l_0-1)}(z)=1$. 
\end{remark}

We have already shown that the degrees of $P^{(j_0,l_0,n)}(z)$ and $Q^{(j_0,l_0,n)}(z)$ are the same. 
In fact, the biorthogonal partners $Q^{(j_0,l_0,n)}(z)$ can be obtained from $P^{(j_0,l_0,n)}(z)$ 
through some transformations on their parameters. 
\begin{prop}
For $j_0\in\{1,2,3,4\}$, we have 
\begin{align}
\label{rela_X1PQ}
Q^{(j_0,l_0,n)}(z)=P^{(j_0,l_0,n)}(z; \beta-1,\alpha+1),
\end{align}
which means $Q^{(j_0,l_0,n)}(z)$ can be obtained from $P^{(j_0,l_0,n)}(z)$ through 
$\alpha\rightarrow\beta-1, \beta\rightarrow\alpha+1$. 
\end{prop}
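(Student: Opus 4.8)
The plan is to establish \eqref{rela_X1PQ} by a direct term-by-term comparison, carried out separately for each $j_0\in\{1,2,3,4\}$, between the substitution $(\alpha,\beta)\mapsto(\beta-1,\alpha+1)$ applied to the closed form of $P^{(j_0,l_0,n)}(z)$ (i.e.\ to $\hat{\psi}^{(j_0,l_0,n)}(z)$ from \eqref{hpsi_0}, or $z^{l_0}\hat{\psi}^{(j_0,l_0,n)}(z)$ for $j_0=3,4$, together with the four explicit instances displayed just after it) and the closed form of $Q^{(j_0,l_0,n)}(z)$ recorded just after \eqref{hL2psi}. Two short preliminary computations are needed first. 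Comparing the list of quasi-Laurent-polynomial eigenfunctions of the GEVP \eqref{GEVP_HR} with that of the adjoint GEVP \eqref{haGEVP_HR}, one reads off that $(\alpha,\beta)\mapsto(\beta-1,\alpha+1)$ sends $P_n(z)=p^{(1)}_n(z)$ to $\tilde{p}^{(1)}_n(z)=P_n(z;\beta-1,\alpha+1)$ and $p^{(j_0)}_{l_0}(z)$ to $\tilde{p}^{(j_0)}_{l_0}(z)$ for every $j_0$, while $\alpha+\beta$ is fixed, $1+\alpha\mapsto\beta$, and $1-\beta-(1+\alpha)z\mapsto-\alpha-\beta z$. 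From \eqref{PQ} and the gauge factors of Proposition~\ref{quasi_ef} one gets $(P^{(1)},Q^{(1)})=(0,1)$, $(P^{(2)},Q^{(2)})=(\alpha+\beta,\,1-z)$, $(P^{(3)},Q^{(3)})=(-(1+\alpha),\,z)$ and $(P^{(4)},Q^{(4)})=(1-\beta-(1+\alpha)z,\,z(z-1))$.

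For $j_0=1,2$ the identity should then be immediate: here $P^{(j_0,l_0,n)}(z)=\hat{\psi}^{(j_0,l_0,n)}(z)$, the factor $Q^{(j_0)}$ carries no parameter, and $P^{(j_0)}$ is invariant, so applying $(\alpha,\beta)\mapsto(\beta-1,\alpha+1)$ to the definition of $\hat{\psi}^{(j_0,l_0,n)}(z)$ reproduces term by term the definition of $Q^{(j_0,l_0,n)}(z)$ with $p^{(j_0)}_{l_0},P_n$ replaced by $\tilde{p}^{(j_0)}_{l_0},\tilde{p}^{(1)}_n$.

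For $j_0=3,4$ I would note that $P^{(j_0,l_0,n)}(z)=z^{l_0}\hat{\psi}^{(j_0,l_0,n)}(z)$ and that the substitution now also changes the degree $\le 1$ polynomial multiplying the $p^{(j_0)}_{l_0}P_n$ term; concretely, the coefficient of $z^{l_0}\tilde{p}^{(j_0)}_{l_0}\tilde{p}^{(1)}_n$ in $z^{l_0}\hat{\psi}^{(j_0,l_0,n)}(z)\big|_{(\alpha,\beta)\to(\beta-1,\alpha+1)}$ becomes $\beta z^{l_0}$ for $j_0=3$ and $(\alpha+\beta z)z^{l_0}$ for $j_0=4$. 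On the $Q$-side, the two $\tilde{p}^{(j_0)}_{l_0}\tilde{p}^{(1)}_n$ terms in $Q^{(j_0,l_0,n)}(z)$ contribute $-z^{l_0}P^{(j_0)}(z)-(1+\alpha-\beta)z^{l_0-1}Q^{(j_0)}(z)$, which for $j_0=3$ is $[(1+\alpha)-(1+\alpha-\beta)]z^{l_0}=\beta z^{l_0}$, and for $j_0=4$ equals $(\alpha+\beta z)z^{l_0}$ by the identity $(1-\beta-(1+\alpha)z)+(1+\alpha-\beta)(z-1)=-\alpha-\beta z$; thus the two sides agree. The part of $Q^{(j_0,l_0,n)}(z)$ carrying the derivatives, $z^{l_0}Q^{(j_0)}(z)\bigl(\tilde{p}^{(j_0)}_{l_0}(\tilde{p}^{(1)}_n)'-(\tilde{p}^{(j_0)}_{l_0})'\tilde{p}^{(1)}_n\bigr)$, matches automatically since $Q^{(j_0)}$ is parameter-free. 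The boundary cases $P^{(1,l_0,l_0)}=Q^{(1,l_0,l_0)}=0$ and $P^{(4,l_0,-l_0-1)}=Q^{(4,l_0,-l_0-1)}=1$ are then consistent trivially.

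The whole argument is routine once the two preliminary lists are in hand; the only step requiring real care — hence the main, and rather mild, obstacle — is the $j_0=3,4$ bookkeeping: one must track which factor of $Q^{(j_0,l_0,n)}(z)$ carries the parameters, and check that the extra term $(1+\alpha-\beta)z^{l_0-1}Q^{(j_0)}(z)$, which appears in $Q^{(j_0,l_0,n)}(z)$ via the $\tfrac{1+\alpha-\beta}{z}$ contribution to $\bigl(\ln(z^{\theta^{(j_0)}_{l_0}}\phi^{(j_0,l_0)}(z^{-1}))\bigr)'$ in the derivation of \eqref{hL2psi}, is precisely what absorbs the shift of $P^{(j_0)}$ under $(\alpha,\beta)\mapsto(\beta-1,\alpha+1)$.
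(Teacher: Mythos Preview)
Your proposal is correct and follows exactly the route the paper indicates: the paper's entire proof is the single sentence ``These relations can be observed easily by comparing the definitions of $p^{(j)}_{n}(z)$ and $\tilde{p}^{(j)}_{n}(z)$,'' and you have carried out that comparison in full detail, including the $j_0=3,4$ bookkeeping with the extra $(1+\alpha-\beta)z^{l_0-1}Q^{(j_0)}(z)$ term that the paper leaves implicit.
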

These relations can be observed easily by comparing the definitions of 
$p^{(j)}_{n}(z)$ and $\tilde{p}^{(j)}_{n}(z)$. 
So, in what follows we will only give the results considering $P^{(j_0,l_0,n)}(z)$, $j_0=1,2,3,4$, 
since those of $Q^{(j_0,l_0,n)}(z)$ follow immediately from (\ref{rela_X1PQ}). 

Denote the degree sequences of $\{P^{(j_0,l_0,n)}(z)\}$ by $S^{(j_0, l_0)}$, then 
\begin{align}
\label{deg_X1PQ_1}
&
S^{(1, l_0)}=\{l_0-1,\ldots, 2l_0-2, 2l_0, \ldots\}, \\
\label{deg_X1PQ_23}
&
S^{(2, l_0)}= S^{(3, l_0)}=\{l_0,l_0+1,\ldots\}, \\
\label{deg_X1PQ_4}
&
S^{(4, l_0)}=\{0, l_0+1, l_0+2, \ldots \}. 
\end{align}
Different from an ordinary LBP or COP, the degree sequences of $P^{(j_0,l_0, j,n)}(z)$ ($Q^{(j_0,l_0, j,n)}(z)$) 
do not start with $0$ but with a degree $\geq 0$ (since $l_0\in\mathbb{N}_{\geq 1}$). 
This is the key characteristic of the exceptional extensions of COP, 
so the biorthogonal systems 
\[
(P^{(1,l_0,n)}(z))_{n\in\{0,1,2,\ldots\}\backslash\{l_0\}}, \quad
(Q^{(1,l_0,n)}(z))_{n\in\{0,1,2,\ldots\}\backslash\{l_0\}};
\]
\[
(P^{(2,l_0,n)}(z))_{n\in\{0,1,2,\ldots\}}, \quad
(Q^{(2,l_0,n)}(z))_{n\in\{0,1,2,\ldots\}};
\]
\[
(P^{(3,l_0,n)}(z))_{n\in\{0,1,2,\ldots\}}, \quad
(Q^{(3,l_0,n)}(z))_{n\in\{0,1,2,\ldots\}};
\]
\[
(P^{(4,l_0,n)}(z))_{n\in\{-l_0-1, 0,1,2,\ldots\}}, \quad
(Q^{(4,l_0,n)}(z))_{n\in\{-l_0-1, 0,1,2,\ldots\}},
\]
can be seen as exceptional extensions of HR polynomials (obtained from single-step Darboux transformation). 
We call them to type 1, type 2, type 3, and type 4 exceptional HR polynomials, respectively. 
It is again easily seen that state-deletion occurs for type 1 while state-addition occurs for type 4. 
Their expressions in terms of the HR polynomials with transformed parameters are: 
\begin{align}
\label{XP01}
&
P^{(1,l_0,n)}(z)
=nP_{l_0}(z)P_{n-1}(z;\alpha+1,\beta)-l_0P_{l_0-1}(z;\alpha+1,\beta)P_{n}(z), \\
\label{XP02}
&
P^{(2,l_0,n)}(z)
=(1-z)\bigg[nP_{l_0}(z; -\beta,-\alpha)P_{n-1}(z;\alpha+1,\beta) \\
&\hspace{24mm}
-l_0P_{l_0-1}(z; 1-\beta,-\alpha)P_{n}(z)\bigg] 
-(\alpha+\beta)P_{l_0}(z; -\beta,-\alpha)P_{n}(z), \nonumber \\
\label{XP03}
&
P^{(3,l_0,n)}(z)
=\dfrac{(\beta)_{l_0}}{(\alpha+1)_{l_0}}\bigg[
nzP_{l_0}(z; \beta-1,\alpha+1)P_{n-1}(z; \alpha+1,\beta) \\
&\hspace{8mm}
+\dfrac{(\alpha+1)l_0}{\beta+l_0-1}P_{l_0-1}(z; \beta-1,\alpha+2)P_{n}(z) 
+(1+\alpha)P_{l_0}(z; \beta-1,\alpha+1)P_{n}(z) \bigg],  \nonumber \\
\label{XP04}
&
P^{(4,l_0,n)}(z)
=
\dfrac{(-\alpha)_{l_0}}{(-\beta+1)_{l_0}}\bigg\{ 
(z-1)\bigg[nzP_{l_0}(z; -\alpha-1,-\beta+1)P_{n-1}(z;\alpha+1,\beta) \\
&\hspace{24mm}
+\dfrac{(-\beta+1)l_0}{-\alpha+l_0-1}P_{l_0-1}(z; -\alpha-1,-\beta+2)P_n(z)
\bigg]  \nonumber \\
&\hspace{8mm}
-(1-\beta-(1+\alpha)z)P_{l_0}(z; -\alpha-1,-\beta+1)P_n(z) 
\bigg\}, \nonumber 
\end{align}
where the formulas (\ref{formula_HR_1}) and (\ref{formula_HR_2}) are used. 

\begin{remark}
These four types of exceptional HR polynomials are different from each other, 
except for some special choices of seed functions. 
For example, when $l_0=1$, the relations between them turn out to be: 
\begin{align}
&
P^{(4,1,n)}(z)
=\dfrac{\alpha(\alpha+n+1)}{(n+1)(\beta-1)}P^{(1,1,n+2)}(z; \alpha-1,\beta-1), \quad n=0,1,2,\ldots \\
&
P^{(2,1,n)}(z)
=-\dfrac{\alpha(\alpha+\beta+n-1)}{(\alpha+n)(\beta-1)}P^{(3,1,n)}(z; \alpha-1,\beta-1), \quad n=0,1,2,\ldots
\end{align}
\end{remark}

\subsection{Biorthogonality}

The biorthogonality relation (\ref{Xbiorth}) can be rewritten as follows
\begin{align}
\label{nXbiorth}
&\langle \hat{\psi}(z), \widehat{L}_2^{\ast}\widehat{\psi^{\ast}}(z) \rangle
=\langle \hat{\psi}^{(j_0,l_0,n)}(z), r^{(j_0,l_0)}(z)Q^{(j_0,l_0,m)}(z) \rangle  \\
&\hspace{6mm}
=\langle w^{(j_0,l_0)}(z)P^{(j_0,l_0,n)}(z), Q^{(j_0,l_0,m)}(z) \rangle 
=(\theta_{n}-\theta^{(j_0)}_{l_0})\langle P_n(z), \overline{L_2^{\ast}}\phi^{(1,m)\ast}(z) \rangle,  \nonumber
\end{align}
where $\psi^{\ast}(z)$ was chosen as $\phi^{(1,m)\ast}(z)$ for a nonnegative integer $m$, 
the inner product $\langle \cdot , \cdot \rangle$ is defined by (\ref{inner_0}), 
$r^{(j_0,l_0)}(z)$ are the rational factors in the right-hand side of (\ref{hL2psi}), 
and the weight functions are given by $w^{(j_0,l_0)}(z)=r^{(j_0,l_0)}(z^{-1})$: 
\begin{align}
\label{XW1}
w^{(j_0,l_0)}(z)
=
\begin{cases}
\dfrac{(\beta)_{l_0}}{(1+\alpha)_{l_0}}
\dfrac{z^{l_0}(z-1)w(z)}{P^2_{l_0}(z)}, & j_0=1, \\
\dfrac{(-\alpha)_{l_0}}{(-\beta+1)_{l_0}}
\dfrac{z^{1+l_0}w(z)}{(1-z)P^2_{l_0}(z; -\beta,-\alpha)}, & j_0=2, \\
\dfrac{(1+\alpha)_{l_0}}{(\beta)_{l_0}}
\dfrac{z^{l_0}(z-1)w(z)}{P^2_{l_0}(z; \beta-1,\alpha+1)}, & j_0=3, \\
\dfrac{(-\beta+1)_{l_0}}{(-\alpha)_{l_0}}
\dfrac{z^{1+l_0}w(z)}{(1-z)P^2_{l_0}(z; -\alpha-1,-\beta+1)}, & j_0=4. 
\end{cases}
\end{align}
Here the relation (\ref{w_1_w}) is used. 
Then, using (\ref{PLP}) we explicitly provide the biorthogonality relations. 

\begin{theorem}
The biorthogonality relations for the four types of exceptional HR polynomials defined by (\ref{XP01})-(\ref{XP04}) 
and (\ref{rela_X1PQ}) are: 
\begin{align}
\label{Xbiorth1}
&\dfrac{1}{2\pi}\int_{0}^{2\pi}w^{(j_0,l_0)}(e^{ix})P^{(j_0,l_0,m)}(e^{ix})Q^{(j_0,l_0,n)}(e^{-ix})dx
=h^{(j_0,l_0)}_{n}\delta_{mn}, 
\end{align}
with
\begin{align}
\label{Xh1}
h^{(j_0,l_0)}_{n}
=
\begin{cases}
-(n+\beta)(n-l_0)h_n,  & j_0=1, \quad (n\neq l_0) \\
-(n+\beta)(n-l_0+\alpha+\beta)h_n, & j_0=2, \\
-(n+\beta)(n+l_0+1+\alpha+\beta)h_n, & j_0=3, \\
-(n+\beta)(n+l_0+1)h_n,  & j_0=4, \quad (n\neq -l_0-1)
\end{cases}
\end{align}
where the constants $h_n$ are defined by (\ref{w_HR}). 
\end{theorem}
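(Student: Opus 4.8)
The plan is to obtain the four relations directly from the abstract biorthogonality \eqref{Xbiorth} of Section~2, specialised to the HR setting, since the paragraphs preceding the statement have already carried out the explicit identification recorded in \eqref{nXbiorth}. Concretely, I would take the eigenfunction of the original GEVP to be $\psi(z)=\phi^{(1,n)}(z)=P_n(z)$ with $\lambda=\theta_n=n$, the seed function to be $\phi(z)=\phi^{(j_0,l_0)}(z)$ with $\kappa=\theta^{(j_0)}_{l_0}$, and the eigenfunction of the adjoint GEVP to be $\psi^{\ast}(z)=\phi^{(1,m)\ast}(z)$ with $\tau=\mu^{(1)}_m=m$ (using Remark~\ref{eigen_ast}). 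Then \eqref{Xbiorth} gives
\[
\langle \widehat{\psi},\widehat{\cL}_2^{\ast}\widehat{\psi^{\ast}}\rangle
=(\tau-\kappa)\,\langle \psi,\overline{L_2^{\ast}}\psi^{\ast}\rangle
=(\tau-\theta^{(j_0)}_{l_0})\,\langle P_n(z),\overline{L_2^{\ast}}\phi^{(1,m)\ast}(z)\rangle ,
\]
and \eqref{PLP} evaluates the last inner product to $-(n+\beta)h_n\delta_{mn}$. On the support of this Kronecker delta the prefactor $(\tau-\theta^{(j_0)}_{l_0})$ equals $n-\theta^{(j_0)}_{l_0}$, so the pairing is $-(n+\beta)(n-\theta^{(j_0)}_{l_0})h_n\,\delta_{mn}$. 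Substituting the four eigenvalues $\theta^{(1)}_{l_0}=l_0$, $\theta^{(2)}_{l_0}=l_0-\alpha-\beta$, $\theta^{(3)}_{l_0}=-l_0-1-\alpha-\beta$, $\theta^{(4)}_{l_0}=-l_0-1$ reproduces the four cases of \eqref{Xh1} on the nose.

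It then remains only to identify this pairing with the integral in \eqref{Xbiorth1}, i.e.\ to re-derive the first line of \eqref{nXbiorth}. For this I would use the normalisation \eqref{hpsi_0} with the decoupling factor \eqref{eps}, which turns $\widehat\psi$ into the explicit (Laurent-)polynomial $\widehat\psi^{(j_0,l_0,n)}(z)$, together with the computation \eqref{hL2psi} of $\widehat{L}_2^{\ast}\widehat{\psi^{\ast}}(z)=r^{(j_0,l_0)}(z)\,Q^{(j_0,l_0,m)}(z)$. Passing to the inner product \eqref{inner_0}, whose second slot is evaluated at $z^{-1}$, converts the rational prefactor $r^{(j_0,l_0)}(z)$ into the weight $w^{(j_0,l_0)}(z)=r^{(j_0,l_0)}(z^{-1})$ of \eqref{XW1} (here the relation \eqref{w_1_w} between $w_1$ and the HR weight $w$ is used); simultaneously one absorbs the powers $z^{\pm l_0}$, and $z^{\pm 2l_0}$ in the cases $j_0=3,4$, by means of $P^{(j_0,l_0,n)}(z)=z^{l_0}\widehat\psi^{(j_0,l_0,n)}(z)$, so that what stands under the integral is exactly $w^{(j_0,l_0)}(e^{ix})P^{(j_0,l_0,m)}(e^{ix})Q^{(j_0,l_0,n)}(e^{-ix})$, and the explicit polynomial forms are those in \eqref{XP01}--\eqref{XP04}. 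This step is essentially bookkeeping already done in the text, so the theorem is a corollary of it.

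Two points require genuine care. First, \eqref{Xbiorth} (equivalently \eqref{adness_X}) depends on the vanishing of the boundary terms in the integration by parts of Lemma~\ref{adjoint_ness}; this holds under the convergence conditions on $\alpha,\beta$ collected in the Remark following the adjoint eigenfunctions (namely $\alpha>-1$, $\beta>-1$, $-1/2<\alpha+\beta<1/2$), supplemented by the hypothesis that the seed polynomial $p^{(j_0)}_{l_0}$ have no zeros on $|z|=1$, which is needed for $\widehat{L}_2^{\ast}\widehat{\psi^{\ast}}$ to be integrable against $\widehat\psi$. Second, the two exceptional degrees must be handled separately: for $j_0=1$ the value $n=l_0$ is excluded because $\widehat\psi^{(1,l_0,l_0)}\equiv0$ (state-deletion), so there is nothing there, while for $j_0=4$ the added state $n=-l_0-1$, with $P^{(4,l_0,-l_0-1)}=Q^{(4,l_0,-l_0-1)}=1$, falls outside the formula for $h^{(4,l_0)}_n$ and its self-pairing $\langle w^{(4,l_0)}\cdot 1,1\rangle$, together with its orthogonality to the remaining $P^{(4,l_0,n)}$, must be verified directly by computing the relevant moments of $w^{(4,l_0)}$. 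The main obstacle I anticipate is precisely the Laurent cases $j_0=3,4$: keeping the gauge factors, the powers of $z$, and the parameter shifts $\alpha\to\beta-1,\ \beta\to\alpha+1$ aligned so that \eqref{hL2psi} matches \eqref{XW1} and \eqref{XP03}--\eqref{XP04} exactly, since a stray sign or a misplaced power of $z$ there would propagate straight into $h^{(j_0,l_0)}_n$.
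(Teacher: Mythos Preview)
Your proposal is correct and follows essentially the same route as the paper: the paper's proof is the one--line deduction ``using \eqref{PLP}'' applied to the identity \eqref{nXbiorth}, which together give $h^{(j_0,l_0)}_{n}=(\theta_n-\theta^{(j_0)}_{l_0})(-(n+\beta)h_n)$, and then the four eigenvalues $\theta^{(j_0)}_{l_0}$ are inserted. Your extra care about boundary terms and the two exceptional indices ($n=l_0$ for $j_0=1$, $n=-l_0-1$ for $j_0=4$) matches exactly what the paper relegates to the remarks following the theorem.
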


\begin{remark}
Note that (\ref{nXbiorth}) holds under the assumption 
that the operator $\overline{ \cF^{\ast}}$ defined by (\ref{XF}) is the adjoint of $\cF$ defined by (\ref{opF}). 
First, it follows from Lemma \ref{adjoint_ness} that $\overline{ \cF^{\ast}}$ is the formal adjoint of $\cF$. 
For any $f(z), g(z)\in C^{2}$, we have 
\begin{align*}
\langle f(z), \cF g(z) \rangle - \langle \overline{ \cF^{\ast}} f(z), g(z) \rangle 
=\dfrac{1}{2\pi}\bigg[\dfrac{ie^{ix}}{\epsilon(e^{-ix})}f(e^{ix})g(e^{-ix})
\bigg]\bigg|_{0}^{2\pi}.
\end{align*}
Since $\epsilon(z)$ (see (\ref{eps})) is a Laurent polynomial, 
the right-hand side equals 0 if and only if 
\[
f(e^{2\pi i})g(e^{-2\pi i})=f(1)g(1). 
\]
Fortunately, both of $P_n(z)$ and 
$\kappa(\overline{\cG_1^{\ast}}-\overline{\cG_2^{\ast}})\tilde{\phi}_2(z)\phi^{(1,m)\ast}(z)$ 
satisfy this condition. So that (\ref{nXbiorth}) can be deduced from 
\[
\langle \cF P_n(z), \kappa(\overline{\cG_1^{\ast}}-\overline{\cG_2^{\ast}})\tilde{\phi}_2(z)\phi^{(1,m)\ast}(z) \rangle
=\langle P_n(z), \kappa\overline{ \cF^{\ast}}(\overline{\cG_1^{\ast}}-\overline{\cG_2^{\ast}})\tilde{\phi}_2(z)
\phi^{(1,m)\ast}(z) \rangle. 
\]
\end{remark}

The positive-definiteness of the exceptional weight functions can be observed from the expressions of 
$h^{(j_0,l_0)}_{n}$. 
In fact, under the conditions (\ref{cond_conv_w}) which guarantee the positivity of $h_n$, 
it is obvious that when $j_0=1,2$, $h^{(j_0,l_0)}_{n}$ is neither identically positive nor identically negative. 
However, $-h^{(3,l_0)}_{n}$ is identically positive under the conditions 
$\beta>0, \alpha+\beta>-l_0-1$, 
and $-h^{(4,l_0)}_{n}$ is identically positive under the condition
$\beta>0$. 
So, together with (\ref{cond_conv_w}), 
both of the exceptional weight functions $-w^{(3,l_0)}(z)$ and $-w^{(4,l_0)}(z)$ are positive-definite 
under the conditions 
\begin{align}
\label{pd_34}
\beta>0, \quad 
\alpha>-1, \quad
\alpha+\beta>-1. 
\end{align}

\begin{remark}
In the case of state-addition, i.e., 
the type 4 exceptional HR polynomials and biorthogonal partners whose starting terms are 
\[
P^{(4,l_0, -l_0-1)}(z)=1, \quad 
Q^{(4,l_0, -l_0-1)}(z)=1,
\]
the biorthogonality constants can be given by
\begin{align*}
&
h^{(j_0,l_0)}_{-l_0-1}
=
\langle w^{(4,l_0)}(z)P^{(4,l_0, -l_0-1)}(z), Q^{(4,l_0, -l_0-1)}(z) \rangle =
\dfrac{1}{2\pi}\int_{0}^{2\pi}w^{(4,l_0)}(e^{ix})dx  \\
&\hspace{24mm}
=C(l_0, \alpha, \beta)
\frac{\pi\Gamma(2-\beta)}{\Gamma(\alpha+1)\Gamma(1-\alpha-\beta)\sin(\pi(1-\alpha-\beta))}, 
\end{align*}
where $C(l_0, \alpha, \beta)$ is some function in $l_0, \alpha, \beta$. 
\end{remark}

\begin{remark}
To guarantee the convergence of the integral in the left-hand-side of the inner product, 
the denominators in the exceptional weight functions should not be zero. 
For type 1 exceptional HR polynomials, 
it requires that $P_{l_0}^2(z)\neq 0$ on the unit circle. 
Since $\alpha, \beta$ are real parameters, it is equivalent to require that $P_{l_0}(z)\neq 0$ for $z=\pm 1$, 
which leads to the conditions
\begin{equation}
\label{cond_X1w_1}
\alpha+\beta\neq -N, \quad 1\leq N\leq l_0, \quad 
\beta\neq \alpha+1, 
\end{equation}
where $N$ is an integer. 
Similarly, for type 2, type 3, and type 4 exceptional HR polynomials, 
the conditions are 
\begin{align}
\label{cond_X1w_2}
&
\alpha+\beta\neq N, \quad 1\leq N\leq l_0, \quad 
\beta\neq \alpha+1, \\
\label{cond_X1w_3}
&
\alpha+\beta\neq -N, \quad 1\leq N\leq l_0, \quad 
\beta\neq \alpha+1, \\
\label{cond_X1w_4}
&
\alpha+\beta\neq N, \quad 1\leq N\leq l_0, \quad 
\beta\neq \alpha+1, 
\end{align}
respectively. 

\end{remark}

\section{Multiple-step Darboux transformations on the generalized eigenvalue problem related to the HR polynomials} 
The results of single-step Darboux transformations of the GEVP related to the HR polynomials and 
the transformed eigenfunctions have been presented in the previous section. 
Here we also want to provide some observations on the results of 
multiple-step Darboux transformations and the transformed eigenfunctions. 
Below we denote the original operators by $L_1^{(0)}:=L_1$ and $L_2^{(0)}:=L_2$, 
and the $j$-th Darboux transformed operators by $L_1^{(j)}$ and $L_2^{(j)}$. 
The same notations apply to 
$\overline{L^{\ast}_1}$, $\overline{L^{\ast}_2}$, 
$\cF$, $\overline{\cF^{\ast}}$, $\cG_1$, $\overline{\cG_1^{\ast}}$, $\cG_2$, $\overline{\cG_2^{\ast}}$, 
$\phi(z)$, $\phi^{\ast}(z)$, $\kappa$, $\tilde{\phi}(z)$, 
$\psi(z)$ and $\psi^{\ast}(z)$.

First, the starting point is the original GEVP and its adjoint problem: 
\[
L_1\psi(z)=\lambda L_2\psi_n(z), \quad
\overline{L^{\ast}_1}\psi^{\ast}_n(z)=\tau \overline{L^{\ast}_2}\psi^{\ast}_n(z)
\]
with the factorizations 
\[
L_1=\tilde{\phi}_1(z)(\cG_1\cF+I),  \quad
\overline{L^{\ast}_1}=(\overline{\cF^{\ast}\cG^{\ast}_1}+I)\tilde{\phi}_1(z^{-1})I, 
\]
\[
L_2=\tilde{\phi}_2(z)(\cG_2\cF+I), \quad
\overline{L^{\ast}_2}=(\overline{\cF^{\ast}\cG^{\ast}_2}+I)\tilde{\phi}_2(z^{-1})I, 
\]
where the second-order differential operators $L_1, L_2$ are defined by (\ref{opL1}), (\ref{opL2}), 
$(\phi(z), \kappa)$ is an eigen-pair of the GEVP $L_1\phi(z)=\kappa L_2\phi(z)$, and
\[
\cF \phi(z)=0, \quad
\tilde{\phi}_1(z)=\dfrac{L_1\phi(z)}{\phi(z)}, \quad
\tilde{\phi}_2(z)=\dfrac{L_2\phi(z)}{\phi(z)}. 
\]
For simplicity, we denote $\tilde{\phi}_2(z)$ by $\tilde{\phi}(z)$, and then we have 
$\tilde{\phi}_1(z)=\kappa\tilde{\phi}(z)$. 
Recall that we have indicated in remark \ref{eigen_ast} that the adjoint problem has the same eigenvalues 
as the original problem, so in what follows, we shall let $\tau=\lambda$. 

A single-step Darboux transformation acts as isospectral transformations of the form: 
\[
L_1\psi(z)=\lambda L_2\psi(z)
\xrightarrow[]{\psi^{(1)}(z)=\cF \psi(z)}
L^{(1)}_1\psi^{(1)}(z)=\lambda L^{(1)}_2\psi^{(1)}(z), 
\]
\[
\overline{L^{\ast}_1}\psi^{\ast}(z)=\lambda \overline{L^{\ast}_2}\psi^{\ast}(z)
\xrightarrow[]{\psi^{(1)\ast}(z)=(\kappa\overline{\cG^{\ast}_1}-\lambda\overline{\cG^{\ast}_2})\tilde{\phi}(z^{-1})\psi^{\ast}(z)}
\overline{L^{\ast}_1}^{(1)} \psi^{(1)\ast}(z)=\lambda\overline{L^{\ast}_2}^{(1)} \psi^{(1)\ast}(z), 
\]
where 
\[
L^{(1)}_1=\kappa(\cF\cG_1+I), \quad
\overline{L^{\ast}_1}^{(1)}=\kappa(\overline{\cG^{\ast}_1\cF^{\ast}}+I),
\]
\[
L^{(1)}_2=\cF\cG_2+I,  \quad
\overline{L^{\ast}_2}^{(1)}=\overline{\cG^{\ast}_2\cF^{\ast}}+I. 
\]

\begin{lemma}
\label{inter_0}
The following intertwining relations can be obtained from the factorizations: 
\begin{eqnarray*}
\cF\tilde{\phi}(z)^{-1}(L_1-\lambda L_2)
\hspace{-2.4mm}&=&\hspace{-2.4mm}
(L^{(1)}_1-\lambda L^{(1)}_2)\cF,  \\
(L_1-\lambda L_2)(\kappa\cG_1-\lambda \cG_2)
\hspace{-2.4mm}&=&\hspace{-2.4mm}
\tilde{\phi}(z)(\kappa\cG_1-\lambda \cG_2)(L^{(1)}_1-\lambda L^{(1)}_2), \\
(\overline{L^{\ast}_1}-\lambda \overline{L^{\ast}_2})\tilde{\phi}(z^{-1})^{-1}\overline{\cF^{\ast}}
\hspace{-2.4mm}&=&\hspace{-2.4mm}
\overline{\cF^{\ast}}(\overline{L^{\ast}_1}^{(1)}-\lambda \overline{L^{\ast}_2}^{(1)}),  \\
(\kappa\overline{\cG^{\ast}_1}-\lambda \overline{\cG^{\ast}_2})(\overline{L^{\ast}_1}-\lambda \overline{L^{\ast}_2})
\hspace{-2.4mm}&=&\hspace{-2.4mm}
(\overline{L^{\ast}_1}^{(1)}-\lambda \overline{L^{\ast}_2}^{(1)})(\kappa\overline{\cG^{\ast}_1}-\lambda \overline{\cG^{\ast}_2})
\tilde{\phi}(z^{-1}), 
\end{eqnarray*}
where $\lambda$ can be an arbitrary constant. 
\end{lemma}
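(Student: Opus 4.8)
The plan is to derive all four identities by pure operator algebra from the factorizations established in Section~2, using only associativity of operator composition. No spectral property of $\lambda$ is used anywhere, which is precisely why $\lambda$ is allowed to be an arbitrary constant in the statement.

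First I would put the pencils into ``decomposed'' form. Writing $\tilde{\phi}_1(z)=\kappa\tilde{\phi}(z)$, $\tilde{\phi}_2(z)=\tilde{\phi}(z)$ and using $L_1=\tilde{\phi}_1(z)(\cG_1\cF+I)$, $L_2=\tilde{\phi}_2(z)(\cG_2\cF+I)$ together with $L^{(1)}_1=\kappa(\cF\cG_1+I)$, $L^{(1)}_2=\cF\cG_2+I$, one obtains
\begin{align*}
L_1-\lambda L_2 &= \tilde{\phi}(z)\left[(\kappa\cG_1-\lambda\cG_2)\cF+(\kappa-\lambda)I\right], \\
L^{(1)}_1-\lambda L^{(1)}_2 &= \cF(\kappa\cG_1-\lambda\cG_2)+(\kappa-\lambda)I.
\end{align*}
Likewise, from $\overline{L^{\ast}_i}=(\overline{\cF^{\ast}}\,\overline{\cG^{\ast}_i}+I)\tilde{\phi}_i(z^{-1})I$ and $\overline{L^{\ast}_1}^{(1)}=\kappa(\overline{\cG^{\ast}_1}\,\overline{\cF^{\ast}}+I)$, $\overline{L^{\ast}_2}^{(1)}=\overline{\cG^{\ast}_2}\,\overline{\cF^{\ast}}+I$, one obtains
\begin{align*}
\overline{L^{\ast}_1}-\lambda\overline{L^{\ast}_2} &= \left[\overline{\cF^{\ast}}(\kappa\overline{\cG^{\ast}_1}-\lambda\overline{\cG^{\ast}_2})+(\kappa-\lambda)I\right]\tilde{\phi}(z^{-1})I, \\
\overline{L^{\ast}_1}^{(1)}-\lambda\overline{L^{\ast}_2}^{(1)} &= (\kappa\overline{\cG^{\ast}_1}-\lambda\overline{\cG^{\ast}_2})\overline{\cF^{\ast}}+(\kappa-\lambda)I.
\end{align*}

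Each of the four intertwining relations then follows by composing one of these decomposed forms with the appropriate first-order factor and regrouping. For the first relation I would multiply $\tilde{\phi}(z)^{-1}(L_1-\lambda L_2)=(\kappa\cG_1-\lambda\cG_2)\cF+(\kappa-\lambda)I$ on the left by $\cF$ and factor $\cF$ back out on the right to recognize $L^{(1)}_1-\lambda L^{(1)}_2$; for the second I would multiply $L_1-\lambda L_2$ on the right by $\kappa\cG_1-\lambda\cG_2$, pull that factor in front of the square bracket, and leave the prefactor $\tilde{\phi}(z)$ on the far left, matching the statement. The two adjoint relations are the mirror images: for the third I compose with $\overline{\cF^{\ast}}$ on the right and cancel $\tilde{\phi}(z^{-1})I$ against $\tilde{\phi}(z^{-1})^{-1}$; for the fourth I compose with $\kappa\overline{\cG^{\ast}_1}-\lambda\overline{\cG^{\ast}_2}$ on the left and move it past the square bracket, leaving $\tilde{\phi}(z^{-1})$ on the right. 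In every case the identity $\cF\phi=0$ is what validates the underlying factorizations but never enters the manipulation itself.

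I do not expect a genuine obstacle here: the content is just the elementary associativity rearrangements above. The only point that needs care is the bookkeeping of operator order, since the $\cG$'s, $\cF$, $\overline{\cF^{\ast}}$ and the multiplication operators $\tilde{\phi}(z)$, $\tilde{\phi}(z^{-1})$ do not commute, so one must keep each $\tilde{\phi}$ on the side dictated by its factorization (on the left for the unhatted pencils, on the right for the adjoint pencils) and resist cancelling it before the surrounding bracket has been reorganized. As a consistency check I would verify the two adjoint identities against the explicit operators of Section~4, namely (\ref{XL1})--(\ref{XG2}), to confirm that the placement of $\tilde{\phi}(z^{-1})I$ relative to $\overline{\cF^{\ast}}$ and $\overline{\cG^{\ast}_j}$ agrees with the conventions used there.
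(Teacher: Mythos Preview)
Your proposal is correct and follows exactly the approach the paper intends: the lemma is stated without proof there, with only the remark that the relations ``can be obtained from the factorizations,'' and your argument supplies precisely those operator-algebra rearrangements. The decomposed forms you write for $L_1-\lambda L_2$, $L^{(1)}_1-\lambda L^{(1)}_2$ and their adjoint counterparts, followed by left/right composition with $\cF$, $\kappa\cG_1-\lambda\cG_2$, $\overline{\cF^{\ast}}$, $\kappa\overline{\cG^{\ast}_1}-\lambda\overline{\cG^{\ast}_2}$ and associativity, are exactly what is needed.
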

Moreover, these intertwining relations can be lifted to higher orders step by step. 
\begin{lemma}
\label{inter_1}
For $j=0,1,\ldots$, the following intertwining relations hold:  
\begin{eqnarray*}
\cF^{(j)}\tilde{\phi}^{(j)}(z)^{-1}(L^{(j)}_1-\lambda L^{(j)}_2)
\hspace{-2.4mm}&=&\hspace{-2.4mm}
(L^{(j+1)}_1-\lambda L^{(j+1)}_2)\cF^{(j)},  \\
(L^{(j)}_1-\lambda L^{(j)}_2)(\kappa^{(j)}\cG^{(j)}_1-\lambda \cG^{(j)}_2)
\hspace{-2.4mm}&=&\hspace{-2.4mm}
\tilde{\phi}^{(j)}(z)(\kappa^{(j)}\cG^{(j)}_1-\lambda \cG^{(j)}_2)(L^{(j+1)}_1-\lambda L^{(j+1)}_2), \\
(\overline{L^{\ast}_1}^{(j)}-\lambda \overline{L^{\ast}_2}^{(j)})\tilde{\phi}^{(j)}(z^{-1})^{-1}\overline{\cF^{\ast}}^{(j)}
\hspace{-2.4mm}&=&\hspace{-2.4mm}
\overline{\cF^{\ast}}^{(j)}(\overline{L^{\ast}_1}^{(j+1)}-\lambda \overline{L^{\ast}_2}^{(j+1)}),  \\
(\kappa^{(j)}\overline{\cG^{\ast}_1}^{(j)}-\lambda \overline{\cG^{\ast}_2}^{(j)})
(\overline{L^{\ast}_1}^{(j)}-\lambda \overline{L^{\ast}_2}^{(j)})
\hspace{-2.4mm}&=&\hspace{-2.4mm}
(\overline{L^{\ast}_1}^{(j+1)}-\lambda \overline{L^{\ast}_2}^{(j+1)})
(\kappa^{(j)}\overline{\cG^{\ast}_1}^{(j)}-\lambda \overline{\cG^{\ast}_2}^{(j)})
\tilde{\phi}^{(j)}(z^{-1}), 
\end{eqnarray*}
where $\lambda$ can be an arbitrary constant. 
\end{lemma}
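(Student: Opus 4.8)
The plan is to prove Lemma~\ref{inter_1} by induction on $j$, the base case $j=0$ being exactly Lemma~\ref{inter_0}. The structural fact that powers the induction is that a single-step Darboux transformation is \emph{self-similar}: by Proposition~\ref{prop_1step} the transformed operators $L_1^{(1)},L_2^{(1)}$ are again bounded second-order differential operators of the form (\ref{opL1})--(\ref{opL2}), so the entire decomposition procedure of Subsection~2.2 can be re-run on the pair $(L_1^{(1)},L_2^{(1)})$ verbatim. Concretely, one fixes an eigenfunction $\phi^{(1)}(z)$ of the transformed GEVP $L_1^{(1)}\phi^{(1)}=\kappa^{(1)}L_2^{(1)}\phi^{(1)}$, takes $\cF^{(1)}$ so that $\cF^{(1)}\phi^{(1)}=0$, and obtains factorizations $L_i^{(1)}=\tilde\phi_i^{(1)}(z)(\cG_i^{(1)}\cF^{(1)}+I)$ with $\tilde\phi_1^{(1)}=\kappa^{(1)}\tilde\phi^{(1)}$ and $\tilde\phi_2^{(1)}=\tilde\phi^{(1)}$, the operators $L_i^{(2)}$ being produced by interchanging $\cG_i^{(1)}$ and $\cF^{(1)}$. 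The same applies to the Hermitian-conjugate operators via the decompositions (\ref{XL1})--(\ref{XG2}). Iterating, at every level $j$ one has exactly the same package of factorization identities as at level $0$.

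Next I would record that Lemma~\ref{inter_0} is a purely algebraic consequence of these factorizations together with $\cF\phi=0$; nothing specific to the HR operators enters. For the first relation this is the one-line check
\begin{align*}
\cF\,\tilde\phi(z)^{-1}(L_1-\lambda L_2)
&=\cF\bigl[(\kappa\cG_1-\lambda\cG_2)\cF+(\kappa-\lambda)I\bigr] \\
&=\bigl[\kappa(\cF\cG_1+I)-\lambda(\cF\cG_2+I)\bigr]\cF
=(L_1^{(1)}-\lambda L_2^{(1)})\cF,
\end{align*}
and the other three follow from analogous short manipulations, the adjoint ones using that $\overline{L_i^{\ast}}$ equals $(\overline{\cF^{\ast}\cG_i^{\ast}}+I)$ composed with multiplication by $\tilde\phi_i(z^{-1})$, so that $(\overline{L_1^{\ast}}-\lambda\overline{L_2^{\ast}})\tilde\phi(z^{-1})^{-1}=\kappa(\overline{\cF^{\ast}\cG_1^{\ast}}+I)-\lambda(\overline{\cF^{\ast}\cG_2^{\ast}}+I)$.

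The inductive step is then immediate: given that the level-$j$ objects satisfy the factorizations recalled above (which is the content of the self-similarity observation, not of the previous level's intertwining relations), applying these same four identities with every symbol decorated by the superscript $(j)$ yields precisely the four intertwining relations of Lemma~\ref{inter_1}, with $L_i^{(j+1)}$ and $\overline{L_i^{\ast}}^{(j+1)}$ on the right. The only part that requires genuine care---and the main obstacle---is the bookkeeping behind the self-similarity claim: one must verify that the Darboux transformation really does preserve the structural class at each step, i.e. that $L_i^{(j)}$ remains a second-order differential operator, that a seed function $\phi^{(j)}$ with $\cF^{(j)}\phi^{(j)}=0$ is available, and that the normalization $\tilde\phi_1^{(j)}=\kappa^{(j)}\tilde\phi^{(j)}$ persists. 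All of this is furnished by Proposition~\ref{prop_1step} and the construction of Subsection~2.2; once invoked, the intertwining relations themselves need no computation beyond that already carried out in the proof of Lemma~\ref{inter_0}.
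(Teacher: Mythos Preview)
Your proposal is correct and matches the paper's approach: the paper offers no proof beyond the single sentence ``these intertwining relations can be lifted to higher orders step by step'' preceding the lemma, and your argument is precisely that lifting made explicit---the factorization machinery of Subsection~2.2 (and Proposition~\ref{prop_1step}) applies verbatim at each level, so the algebraic verification behind Lemma~\ref{inter_0} carries over with all symbols decorated by the superscript~$(j)$. Your explicit check of the first relation and your observation that the argument is not genuinely inductive (the level-$j$ relations depend only on the level-$j$ factorizations, not on the level-$(j{-}1)$ intertwinings) are both accurate and go slightly beyond what the paper itself spells out.
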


Inductively, for $N\geq 1$, the $N$-step Darboux transformation can be characterized by the following intertwining relations: 
\begin{eqnarray*}
\dfrac{\cF^{(N-1)}}{\tilde{\phi}^{(N-1)}(z)}\circ\cdots\circ\dfrac{\cF^{(0)}}{\tilde{\phi}^{(0)}(z)}\circ
(L_1^{(0)}-\lambda L_2^{(0)})
\hspace{-2.4mm}&=&\hspace{-2.4mm}
(L^{(N)}_1-\lambda L^{(N)}_2)\circ\cF^{(N-1)}\circ\cdots\circ\cF^{(0)},  \\
&&\hspace{-70mm}
(L_1^{(0)}-\lambda L_2^{(0)})(\kappa\cG_1^{(0)}-\lambda \cG_2^{(0)})\circ\cdots
\circ(\kappa^{(N-1)}\cG_1^{(N-1)}-\lambda \cG_2^{(N-1)}) \\
&&\hspace{-72mm}=
\tilde{\phi}^{(0)}(z)(\kappa\cG_1^{(0)}-\lambda \cG_2^{(0)})\circ\cdots\circ
\tilde{\phi}^{(N-1)}(z)(\kappa^{(N-1)}\cG_1^{(N-1)}-\lambda \cG_2^{(N-1)})
(L^{(N)}_1-\lambda L^{(N)}_2), \\
(\overline{L^{\ast}_1}^{(0)}-\lambda \overline{L^{\ast}_2}^{(0)})\circ
\dfrac{\overline{\cF^{\ast}}^{(0)}}{\tilde{\phi}^{(0)}(z^{-1})}\circ\cdots\circ
\dfrac{\overline{\cF^{\ast}}^{(N-1)}}{\tilde{\phi}^{(N-1)}(z^{-1})}
\hspace{-2.4mm}&=&\hspace{-2.4mm}
\overline{\cF^{\ast}}^{(0)}\circ\cdots\circ\overline{\cF^{\ast}}^{(N-1)}\circ
(\overline{L^{\ast}_1}^{(N)}-\lambda \overline{L^{\ast}_2}^{(N)}),  \\
&&\hspace{-76mm}
(\kappa^{(N-1)}\overline{\cG^{\ast}_1}^{(N-1)}-\lambda \overline{\cG^{\ast}_2}^{(N-1)})\circ\cdots\circ
(\kappa\overline{\cG^{\ast}_1}^{(0)}-\lambda \overline{\cG^{\ast}_2}^{(0)})\circ
(\overline{L^{\ast}_1}^{(0)}-\lambda \overline{L^{\ast}_2}^{(0)}) \\
&&\hspace{-88mm}=
(\overline{L^{\ast}_1}^{(N)}-\lambda \overline{L^{\ast}_2}^{(N)})\circ
(\kappa^{(N-1)}\overline{\cG^{\ast}_1}^{(N-1)}-\lambda \overline{\cG^{\ast}_2}^{(N-1)})\tilde{\phi}^{(N-1)}(z^{-1})\circ\cdots\circ
(\kappa\overline{\cG^{\ast}_1}^{(0)}-\lambda \overline{\cG^{\ast}_2}^{(0)})\tilde{\phi}^{(0)}(z^{-1}), 
\end{eqnarray*}
where $\lambda$ can be an arbitrary constant. 
It follows from the above intertwining relations that if $\lambda$ is an eigenvalue of the original GEVP, 
then $\lambda$ is an eigenvalue of each GEVP for $j=1,2,\ldots$
In what follows, keep in mind that $\lambda$ is an eigenvalue of the GEVPs. 
Let 
\[
\cM^{(j)}_{\lambda}:=
(\kappa^{(j)}\overline{\cG^{\ast}_1}^{(j)}-\lambda \overline{\cG^{\ast}_2}^{(j)})\tilde{\phi}^{(j)}(z^{-1}),
\] 
then the $N$-step Darboux transformed eigenfunctions can be expressed as 
\begin{align}
\label{XNP_0}
&\psi^{(N)}(z)=\cF^{(N-1)}\circ\cdots\circ\cF^{(0)} [\psi(z)], \\
\label{XNP_ast_0}
&
\psi^{(N)\ast}(z)=\cM^{(N-1)}_{\lambda}\circ\cdots\circ\cM^{(0)}_{\lambda} [\psi^{\ast}(z)], \\
\label{XNQ_0}
&
\overline{L^{\ast}_2}^{(N)} [\psi^{(N)\ast}(z)]  
=\cM^{(N-1)}_{\kappa^{(N-1)}}[\psi^{(N-1)\ast}(z)] 
=\cM^{(N-1)}_{\kappa^{(N-1)}}\circ\cM^{(N-2)}_{\lambda}\circ\cdots\circ\cM^{(0)}_{\lambda} [\psi^{\ast}(z)],
\end{align}
where 
\[
\cF^{(j)}=\dfrac{\phi^{(j)}(z)}{\epsilon^{(j)}(z)}\partial_z\dfrac{1}{\phi^{(j)}(z)} I
=\dfrac{1}{\epsilon^{(j)}(z)}\left(\partial_z-(\ln{(\phi^{(j)}(z))})' I\right), \quad
\]
and $\phi^{(j)}(z)$ is the seed function in the $j$-th step Darboux transformation,  
\[
L_1^{(j)}\phi^{(j)}(z)=\kappa^{(j)}L_2^{(j)}\phi^{(j)}(z), \quad
\tilde{\phi}^{(j)}(z)=\dfrac{L_2^{(j)}\phi^{(j)}(z)}{\phi^{(j)}(z)}. 
\]

It turns out that the operators $\cF^{(j)}$ and 
$\cM^{(j)}_{\kappa^{(j)}}$, $j=0,1,\ldots$,  
can be expressed in a similar form. 
In fact, by induction, we know 
$A_2^{(j)}(z)=0$, $j=1,\ldots, N-1$, and
\begin{align*}
&\dfrac{\kappa^{(N-1)}(\overline{\cG^{\ast}_1}^{(N-1)}-\overline{\cG^{\ast}_2}^{(N-1)})\tilde{\phi}^{(N-1)}(z^{-1})}
{z^2A_1^{(N-1)}(z^{-1})\epsilon^{(N-1)}(z^{-1})}  \\
&=\partial_z
+\bigg(\dfrac{B_1^{(N-1)}(z^{-1})-(A^{(N-1)}_1)'(z^{-1})-\kappa^{(N-1)}B_2^{(N-1)}(z^{-1})}{z^2A_1^{(N-1)}(z^{-1})}
+\dfrac{1}{z}+\dfrac{(\phi^{(N-1)})'(z^{-1})}{z^2\phi^{(N-1)}(z^{-1})})
\bigg) I \\
&=\partial_z
+\left(-\dfrac{(w^{(N-1)}_0)'(z)}{w^{(N-1)}_0(z)}-\dfrac{\kappa^{(N-1)}}{z}+\dfrac{(\phi^{(N-1)})'(z^{-1})}{z^2\phi^{(N-1)}(z^{-1})}
\right) I \\
&=\partial_z
-\left(\ln{(w^{(N-1)}_0(z)z^{\kappa^{(N-1)}}\phi^{(N-1)}(z^{-1}))}
\right)' I,  \\
&\dfrac{(\kappa\overline{\cG^{\ast}_1}-\lambda \overline{\cG^{\ast}_2})\tilde{\phi}(z^{-1})}
{z^2A_1(z^{-1})\epsilon(z^{-1})}
=\partial_z
+\left(\dfrac{B_1(z^{-1})-A'_1(z^{-1})-\lambda B_2(z^{-1})}{z^2A_1(z^{-1})}
+\dfrac{1}{z}+\dfrac{\phi'(z^{-1})}{z^2\phi(z^{-1})})\right) I \\
&\hspace{33mm}
=\partial_z
+\left(-\dfrac{w'_1(z)}{w_1(z)}-\dfrac{\lambda}{z}+\dfrac{\phi'(z^{-1})}{z^2\phi(z^{-1})}
\right) I \\
&\hspace{33mm}
=\partial_z
-\left(\ln{(w_1(z)z^{\lambda}\phi(z^{-1}))}
\right)' I,  \\
&\dfrac{(\kappa^{(j)}\overline{\cG^{\ast}_1}^{(j)}-\lambda\overline{\cG^{\ast}_2}^{(j)})\tilde{\phi}^{(j)}(z^{-1})}
{z^2A^{(j)}_1(z^{-1})\epsilon^{(j)}(z^{-1})}
=\partial_z
-\left(\ln{(w^{(j)}_1(z)z^{\lambda}\phi^{(j)}(z^{-1}))}
\right)' I, \quad j=1,\ldots, N-2,
\end{align*}
where $w^{(j)}_1(z)$, $j=1,\ldots, N-1$, are defined by 
\[
\dfrac{(w^{(j)}_1)'(z)}{w^{(j)}_1(z)}
=-\dfrac{1}{z}-\dfrac{B_1^{(j)}(z^{-1})-(A^{(j)}_1)'(z^{-1})}{z^2A_1^{(j)}(z^{-1})}. 
\]

Note that for any eigenpair $(\phi^{(j)\ast}(z), \kappa^{(j)})$ of the GEVP 
\[
((\overline{L^{\ast}_1}^{(j)}-\kappa^{(j)}\overline{L^{\ast}_2}^{(j)}))\phi^{(j)\ast}(z)=0,
\] 
it follows from the last intertwining relation of lemma \ref{inter_1} that 
\[
\cM^{(j)}_{\kappa^{(j)}}\phi^{(j)\ast}(z)=0, 
\]
which implies 
\[
\phi^{(j)\ast}(z)=S_nw^{(j)}_1(z)z^{\kappa^{(j)}}\phi^{(j)}(z^{-1}),  \quad
S_n\in\mathbb{R}. 
\]
Denote by $M_{j}(z)=z^2A^{(j)}_1(z^{-1})\epsilon^{(j)}(z^{-1})$, then for $j=0,1,\ldots$, we have 
\begin{align}
&
\cM^{(j)}_{\kappa^{(j)}}
=M_{j}(z)\left(\partial_z
-\left(\ln{(\phi^{(j)\ast}(z))}
\right)' I
\right), \\
&
\cM^{(j)}_{\lambda}
=M_{j}(z)\left(\partial_z
-\left(\ln{(z^{\lambda-\kappa^{(j)}}\phi^{(j)\ast}(z))}
\right)' I
\right).
\end{align}

By choosing the seed functions from the transformed quasi-polynomial eigenfunctions 
\[
\phi^{(0)}(z)=\phi^{(k_0,l_0)}(z), \quad 
\phi^{(1)}(z)=\cF[\phi^{(k_1,l_1)}(z)], \quad
\phi^{(2)}(z)=\cF^{(1)}\circ\cF[\phi^{(k_2,l_2)}(z)], \quad \cdots
\] 
where $k_0,k_1,k_2,\ldots\in\{1,2,3,4\}$, and $l_0, l_1, l_2, \cdots\in\mathbb{N}_{\geq 1}$, 
the $N$-step Darboux transformed eigenfunctions (\ref{XNP_0}) can be expressed in terms of Wronski determinants: 
\begin{align}
\psi^{(N)}(z)
=\left(\prod^{N-1}_{j=0}\epsilon^{(j)}(z)\right)^{-1}
\dfrac{Wr[\phi^{(k_0,l_0)}(z), \cdots, \phi^{(k_{N-1},l_{N-1})}(z), \psi(z)]}
{Wr[\phi^{(k_{0},l_0)}(z), \cdots, \phi^{(k_{N-1},l_{N-1})}(z)]},
\end{align}
where
\[
Wr[y_1(z), \cdots, y_{n-1}(z), y_n(z)] 
=\begin{vmatrix}
y_1(z) & \cdots & y_{n-1}(z) & y_n(z) \\
y'_1(z) & \cdots & y'_{n-1}(z) & y'_n(z) \\
\vdots & \ddots & \vdots & \vdots \\
y^{(n)}_1(z) & \cdots & y^{(n)}_{n-1}(z) & y^{(n)}_n(z) \\
\end{vmatrix}.
\]

At the same time, the product (\ref{XNQ_0}) of the biorthogonal partners and the conjugate of the weight function 
can be expressed as 
\begin{align}
&\overline{L^{\ast}_2}^{(N)} [\psi^{(N)\ast}(z)]  
=\left(\prod^{N-1}_{j=0}M_{j}(z)\right) \\
&\hspace{6mm}\cdot
\dfrac{Wr[z^{\lambda-\kappa^{(0)}}\phi^{(k_0,l_0)\ast}(z), \cdots, 
z^{\lambda-\kappa^{(N-2)}}\phi^{(k_{N-2},l_{N-2})\ast}(z), 
\phi^{(k_{N-1},l_{N-1})\ast}(z), \psi^{\ast}(z)]}
{Wr[z^{\lambda-\kappa^{(0)}}\phi^{(k_0,l_0)\ast}(z), \cdots, z^{\lambda-\kappa^{(N-2)}}\phi^{(k_{N-2},l_{N-2})\ast}(z), 
\phi^{(k_{N-1},l_{N-1})\ast}(z)]}, \nonumber
\end{align}
The biorthogonality relation holds for
\begin{align}
&\langle \psi^{(N)}(z), \overline{L^{\ast}_2}^{(N)} [\psi^{(N)\ast}(z)]   \rangle
=(\lambda-\kappa^{(N-1)})\cdots(\lambda-\kappa^{(0)})
\langle \psi^{(0)}(z), \overline{L^{\ast}_2}^{(0)} [\psi^{(0)\ast}(z)]   \rangle.  \nonumber
\end{align}

As for the classification of the exceptional HR polynomials obtained from multiple-step Darboux transformations, 
the positive-definiteness of the weight functions and the related issues, 
we will discuss them carefully in our future work.

\section{Concluding remarks}

The theory of exceptional orthogonal polynomials started around 2008 
and has developed rapidly since then.
The exceptional extensions of COP from the Askey scheme and Askey-Wilson scheme have been studied widely and deeply, 
including the $q\rightarrow -1$ cases \cite{XBI}.
However, exceptional extensions to more general orthogonal functions related to GEVP
have not been attempted before.
In this paper, we construct exceptional extensions of the HR polynomials as a first example
of satisfying the GEVP for a pair of differential operators.

There are similarities between exceptional analogs of LBP and COP. 
For instance, both of them allow gaps in their degree sequences; in both cases, the state-deletion and state-addition occur. But one can find that there exists a significant difference in the appearance of quasi-Laurent-polynomial eigenfunctions of the GEVP related to an LBP, which has not appeared in the case of COP. 
In our upcoming series of work, the exceptional Pastro polynomials 
(which are the $q$-analogue of HR's case) 
will be provided, and the exceptional biorthogonal polynomials obtained from multiple-step Darboux transformations will be discussed in detail. 
Many other important properties such as recurrence relations for the exceptional biorthogonal functions
are left for future work.

\section*{Acknowledgement}
This work was partially supported by JSPS KAKENHI Grant Numbers JP19H0179.


\end{document}